\documentclass{article}
\usepackage[paper=a4paper,left=30mm,right=30mm,top=25mm,bottom=25mm]{geometry}
\usepackage{graphicx}
\usepackage{subcaption}
\usepackage{lmodern}
\usepackage{comment}
\usepackage{dsfont}
\usepackage{pifont}
\usepackage{amsmath}
\usepackage{amsthm}
\usepackage{amssymb}
\usepackage{booktabs}
\usepackage{mathtools}
\usepackage{enumerate}
\usepackage{color}
\usepackage{csquotes}
\usepackage[numbers]{natbib}
\usepackage{hyperref}
  
\newtheorem{theorem}{Theorem}[section]
\newtheorem{lemma}[theorem]{Lemma}
\newtheorem{corollary}[theorem]{Corollary}
\newtheorem{proposition}[theorem]{Proposition}

\newtheorem{remark}[theorem]{Remark}

\newcommand{\1}{\mathds{1}}
\makeatletter
\newcommand\nopagebreakhere{\par\nobreak\@afterheading}
\makeatother

\newcommand{\de}{\mathrm{\,d}}
\newcommand{\CC}{\mathcal{C}}  
\newcommand{\R}{\mathbb{R}}
\providecommand{\keywords}[1]{\textbf{Keywords } #1}
\newcommand{\PiC}{\Pi}         
\newcommand{\MC}{M}            
\newcommand{\WC}{W}            

\title{\textbf{On the exact region between Chatterjee's rank correlation and Spearman's footrule}}
\author{Marcus Rockel}
\date{\today}

\begin{document}

\maketitle
\begin{center}
	\small\textit{
		Department of Quantitative Finance,\\
		Institute for Economics, University of Freiburg,\\
		Rempartstr.\;16, 79098 Freiburg, Germany,\\
        \texttt{marcus.rockel@finance.uni-freiburg.de} \\[2mm]
	}
\end{center}
\begin{abstract}
Chatterjee's rank correlation \(\xi\) has emerged as a popular measure quantifying the strength of directed functional dependence between random variables $X$ and $Y$.
If $X$ and $Y$ are continuous, $\xi$ equals Spearman's footrule~\(\psi\) for the Markov product of the copula induced by $(X,Y)$ and its transpose.
We analyze the relationship between these two measures more in depth by studying the attainable region of possible pairs \((\xi, \psi)\) over all bivariate copulas.
In particular, we show that for given $\xi$, the maximal possible value of $\psi$ is uniquely attained by a Fréchet copula.
As a by-product of this and a known result for Markov products of copulas, we obtain that \(\xi\le\psi\le \sqrt{\xi}\) characterizes the exact region of stochastically increasing copulas.
Regarding the minimal possible value of \(\psi\) for given \(\xi\), we give a lower bound based on Jensen's inequality and construct a two-parameter copula family that comes comparably close.
\end{abstract} 
\vspace{0ex}
\keywords{convex optimization; copula; Jensen's inequality; Karush-Kuhn-Tucker conditions; Markov product; ordinal sums; stochastically increasing}

\section{Introduction}
\label{sec:intro}

Let $\CC$ denote the space of all bivariate \emph{copulas}, i.e.\;the set of joint cumulative distribution functions defined on the unit square $[0,1]^2$ with standard uniform marginals.
If $X$ and $Y$ are continuous random variables, then by Sklar's theorem there exists a unique copula $C\in\CC$ that satisfies
\(
    F_{X,Y}(x,y) = C(F_X(x), F_Y(y))
\)
for all \(x,y \in \mathbb{R}\), where \(F_{X,Y}\), \(F_X\), and \(F_Y\) denote the joint and marginal cumulative distribution functions of \(X\) and \(Y\), respectively, see \cite{sklar1959fonctions}.
Two important measures of dependence between continuous random variables $X$ and $Y$ that can be written purely as functions of the associated copula are \textit{Chatterjee's rank correlation} \(\xi\) and \textit{Spearman's footrule}~\(\psi\).
Chatterjee's rank correlation \(\xi:\CC\rightarrow[0,1]\), which is also known as the \emph{Dette-Siburg-Stoimenov dependence measure}, measures the degree of functional dependence and is given by
\begin{align}\label{eq:xi-definition}
\xi(C) \coloneq 6 \int_0^1\int_0^1 \left(\partial_1 C(u,v)\right)^2 \de u \de v - 2,
\end{align}
see \cite{chatterjee2020,dette2013copula}.
Note that $\xi$ is certainly well-defined since copulas are almost everywhere partially differentiable, cf.~\cite[Thm.~2.2.7]{Nelsen-2006}.
It holds that $\xi(C)=0$ if and only if $X$ and $Y$ are independent, and $\xi(C)=1$ if and only if $Y$ is a measurable function of $X$, almost surely. 
Generally, the closer \(\xi(C)\) is to \(1\), the stronger the functional dependence of \(Y\) on \(X\), but an exact interpretation of intermediate values remains challenging.

Spearman's footrule \(\psi:\CC\rightarrow[-\frac12,1]\) on the other hand is a measure of association used e.g.\;for comparing sets of ranks, and is defined as
\begin{align}\label{eq:psi-definition}
\psi(C) \coloneq 6 \int_0^1 C(u,u) \de u - 2,
\end{align}
see \cite{genest2010spearman} for an overview.
Let \(C^\top(u,v) := C(v,u)\) for $u,v\in[0,1]$ be the \emph{transpose} of $C$ and define for $C_1,C_2\in\CC$ the \emph{Markov product} \((C_1 \ast C_2)(u,v) \coloneq \int_0^1 \partial_2 C_1(u,t)\partial_1 C_2(t,v) \de t\), $(u,v)\in[0,1]^2$, which is again a copula by \cite[Thm.~2.1]{darsow1992copulas}.
With this definition in hand, one can rewrite \(\xi(C)\) as
\begin{align}\label{eq:xi-psi-relation}
\xi(C)
= 6\int_0^1 \int_0^1 \partial_2 C^\top(v,u) \partial_1 C(u,v) \de u \de v - 2
= 6\int_0^1 (C^\top\ast C)(v,v) \de v - 2
= \psi(C^\top \ast C).
\end{align}
This is an insightful way of writing \(\xi(C)\), as Spearman's footrule is a classical measure of association that has been extensively studied in the past decades.
In particular, several works have analyzed exact regions between Spearman's footrule and other measures of association over the full space $\CC$, see \cite{kokol2022exact,bukovvsek2023exact,bukovvsek2024exact,kokol2021exact,tschimpke2025revisiting}.
In comparison, the literature on Chatterjee's rank correlation is still in its infancy and concerning exact regions with other measures of association over the full space $\CC$, to the best of our knowledge, only the exact region with Spearman's rho $\rho$ has been established so far, see \cite{ansari2025exact}.
The $(\xi,\rho)$-region provides sharp upper and lower bounds for the possible values of $\rho$ given $\xi$.
For example, it was shown that when $\xi=0.3$ the absolute value of $\rho$ cannot exceed $0.7$.

Motivated by these previous results and the natural relation \eqref{eq:xi-psi-relation} between \(\xi\) and \(\psi\), we study the exact region of possible pairs \((\xi(C), \psi(C))\) over all copulas \(C \in \CC\).
To do so, we express \(\psi(C)\) in terms of \(h(u,v) \coloneqq \partial_1 C(u,v)\).
Since \(C(u,u) = \int_0^u h(t,u) \de t\), one can write
\begin{align}\label{eq:psi-in-terms-of-h}
\psi(C) = 6 \int_0^1 \left( \int_0^u h(t,u) \de t \right) \de u - 2
= 6 \int_0^1 \int_0^1 \1_{\{t \le v\}} h(t,v) \de t \de v - 2.
\end{align}
\begin{figure}[t!]
\centering
\includegraphics[width=\textwidth]{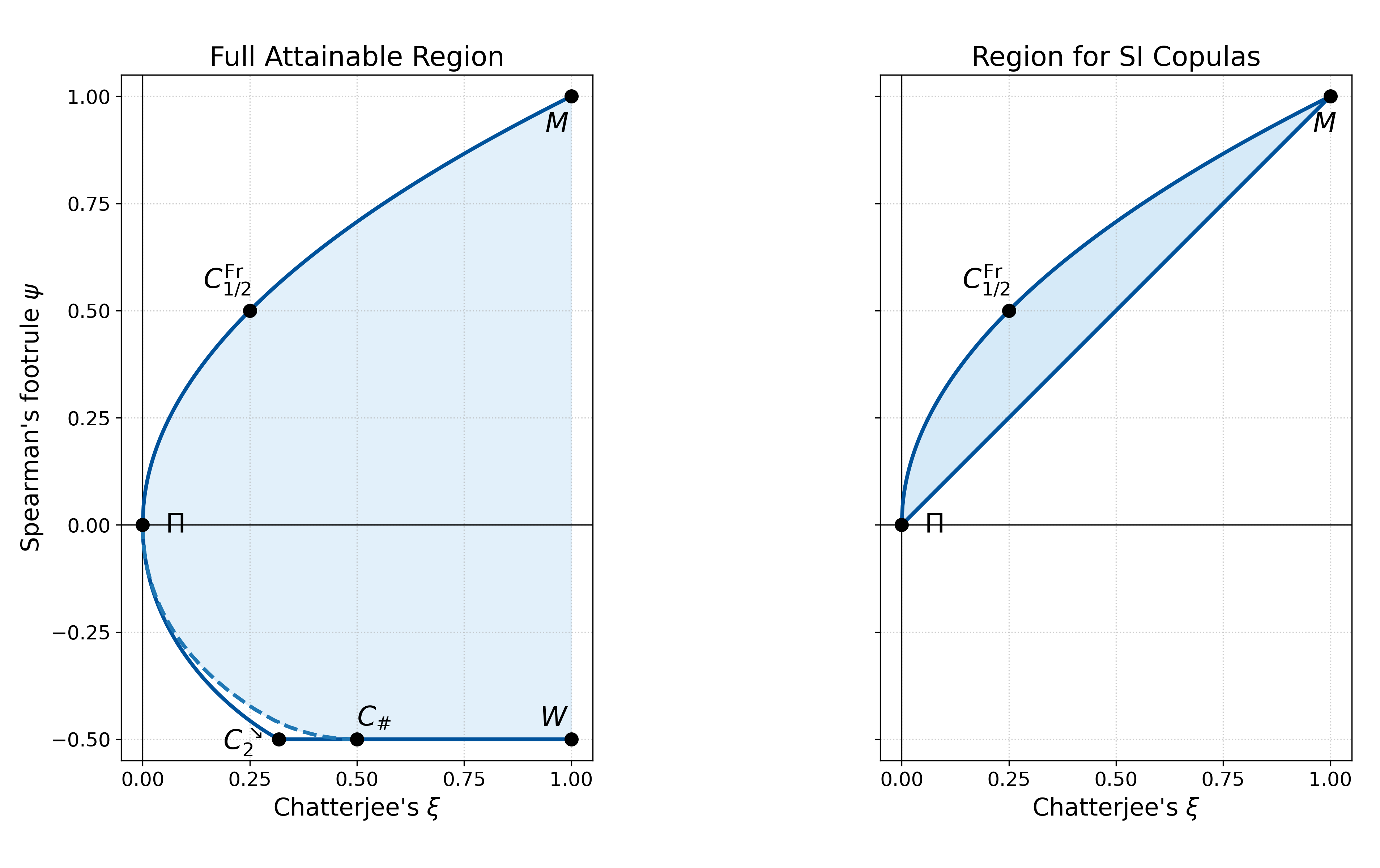}
\caption{
    The attainable $(\xi,\psi)$-region for all copulas on the left and the $(\xi,\psi)$-region for stochastically increasing copulas on the right.
    $\PiC(u,v):=uv$, $\MC(u,v):=\min\{u,v\}$ and $\WC(u,v):=\max\{u+v-1, 0\}$, $u,v\in[0,1]$, denote the independence, upper and lower Fréchet copulas, respectively, $C_{\#}$ the $2\times 2$-checkerboard copula with zero mass on the main diagonal, which minimizes $\xi$ over all copulas with $\psi = -0.5$, and $C_2^{\searrow}$ from Section~\ref{sec:lower_bound_jensen} is in fact \emph{not} a true copula.
    The Fréchet copula family $((1-\alpha)\PiC+\alpha\MC)_{\alpha\in[0,1]}$ uniquely traces the upper boundary of both regions, the solid line from $\PiC$ to $C_{\#}$ provides a lower bound for the full attainable region and the dotted line from $\PiC$ to $C_{\#}$ is obtained from the copula family $(C_{\mu})_{\mu\ge 0}$ defined in Section~\ref{sec:two_param_copula}.
    A superclass of the ordinal sums of $\PiC$ attains the lower boundary of the SI region.
    Both regions are convex and closed.
} 
\label{fig:attainable_region}
\end{figure}

Hence, \(\psi(C)\) is a linear functional of \(h\) whilst \(\xi(C)\) is a quadratic functional of \(h\).
This insight is key to derive the results for the $(\xi,\psi)$-region, as it allows us to adapt the approach from \cite{ansari2025exact} and perform constrained optimizations of $\xi$ and $\psi$ over $h$.
The constraints in this optimization are obtained from the following fundamental lemma.

\begin{lemma}[A characterization of copulas, \cite{ansari2025exact}]\label{charSIcop}
    A function \(C\colon [0,1]^2 \to [0,1]\) is a bivariate copula if and only if there exists a family \((h_v)_{v\in [0,1]}\) of measurable functions \(h_v\colon [0,1]\to [0,1]\) such that
    \begin{enumerate}[(i)]
        \item \label{charSIcop1} \(C(u,v) = \int_0^u h_v(t) \de t\) for all \((u,v)\in [0,1]^2\),
        \item \label{charSIcop3}\(\int_0^1 h_v(t) \de t = v\) for all \(v\in [0,1]\),
        \item \label{charSIcop2} \(h_v(t)\) is non-decreasing in \(v\) for all \(t\in [0,1]\).
    \end{enumerate}
\end{lemma}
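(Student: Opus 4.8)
The plan is to prove both implications of the biconditional, reading the family $(h_v)_{v\in[0,1]}$ as the conditional distribution functions $h_v(t)=\partial_1 C(t,v)$ that encode the law of the second coordinate given the first. I would dispatch the easier direction first: assuming such a family exists, define $C(u,v):=\int_0^u h_v(t)\,\de t$ and verify the three defining properties of a copula, namely groundedness, uniform margins, and the 2-increasing (rectangle) inequality. Groundedness $C(0,v)=0$ is immediate, while $C(u,0)=0$ follows because property~(ii) gives $\int_0^1 h_0=0$ with $h_0\ge 0$, forcing $h_0=0$ almost everywhere. The margin $C(1,v)=v$ is exactly~(ii), and $C(u,1)=u$ follows since~(ii) together with $h_1\le 1$ forces $h_1=1$ almost everywhere. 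The rectangle inequality is the only nontrivial point but is short: for $u_1\le u_2$ and $v_1\le v_2$ the $C$-mass of $[u_1,u_2]\times[v_1,v_2]$ equals $\int_{u_1}^{u_2}\bigl(h_{v_2}(t)-h_{v_1}(t)\bigr)\,\de t$, which is nonnegative precisely by the monotonicity~(iii). Since groundedness, uniform margins, and 2-increasingness characterize copulas, this direction is complete.

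For the forward direction I would start from the standard fact that, for a copula $C$ and each fixed $v$, the section $u\mapsto C(u,v)$ is $1$-Lipschitz, hence absolutely continuous and differentiable almost everywhere with derivative in $[0,1]$. The delicate issue is to select one \emph{version} of $\partial_1 C(\cdot,v)$ for which property~(iii) holds at \emph{every} $t$ rather than merely almost everywhere. I would resolve this by setting $h_v(t):=\limsup_{s\downarrow 0}s^{-1}\bigl(C(t+s,v)-C(t,v)\bigr)$, the upper right Dini derivative, which coincides with the genuine derivative wherever the latter exists. This choice simultaneously delivers the remaining requirements: it takes values in $[0,1]$ and is measurable (the $\limsup$ may be restricted to rational $s$ by continuity of $C$ in its first argument), and the fundamental theorem of calculus for absolutely continuous functions gives both $C(u,v)=\int_0^u h_v(t)\,\de t$ and $\int_0^1 h_v(t)\,\de t=C(1,v)=v$, yielding~(i) and~(ii).

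The step I expect to be the main obstacle is property~(iii), and it is exactly where the $\limsup$-formulation pays off. For each fixed $t$ and $s>0$, the difference quotient $s^{-1}\bigl(C(t+s,v)-C(t,v)\bigr)$ is non-decreasing in $v$, because the increment across $v_1\le v_2$ is the $C$-mass of the rectangle $[t,t+s]\times[v_1,v_2]$, which is nonnegative by 2-increasingness. Since $\limsup$ preserves pointwise inequalities between functions, this monotonicity survives the passage to the limit \emph{for every} $t$, which is precisely~(iii). An arbitrary almost-everywhere version of the derivative would only furnish monotonicity up to $v$-dependent null sets that cannot be controlled uniformly in $t$, so the concrete Dini-derivative construction is essential here. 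The remaining technicalities — handling the right endpoint $t=1$, where $t+s>1$, by a harmless redefinition on a null set, and confirming measurability of $v\mapsto h_v(t)$ where needed — do not affect any of the integrals and are routine.
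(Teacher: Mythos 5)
Your proof is correct. Note first that the paper does not actually prove this lemma---it imports it from \cite{ansari2025exact}---so the natural comparison is with the argument there, which (as is standard in this literature) obtains the family $(h_v)$ from a Markov kernel, i.e.\ a regular conditional distribution of the copula measure: one sets $h_v(t)=K_C(t,[0,v])$, which is a distribution function in $v$ for \emph{every} $t$ by construction, and that is how the pointwise (not merely a.e.) monotonicity in (iii) is secured there. Your route is genuinely different and more elementary: you sidestep disintegration entirely by choosing the upper right Dini derivative as a canonical version of $\partial_1 C(\cdot,v)$. The key observation---that the difference quotients $s^{-1}\bigl(C(t+s,v)-C(t,v)\bigr)$ are non-decreasing in $v$ for every fixed $t$ and $s>0$ by 2-increasingness, and that $\limsup$ preserves pointwise inequalities---delivers (iii) at every $t$, while absolute continuity of the $1$-Lipschitz sections plus the a.e.\ coincidence of the Dini derivative with the true derivative gives (i) and (ii). The converse direction is essentially forced and agrees with any proof of this statement (groundedness, uniform margins, and the rectangle inequality via (iii)). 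What the kernel proof buys is a probabilistic interpretation and joint measurability for free; what your proof buys is self-containedness, using only real analysis. The loose ends you flag---defining $h_v(1)$ as some fixed non-decreasing function of $v$ (e.g.\ $h_v(1):=v$) so that (iii) also holds at $t=1$, and restricting the $\limsup$ to rational $s$ (legitimate since the quotient is continuous in $s$) to get measurability in $t$---are handled exactly as you indicate and are indeed routine.
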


The rest of the paper is organized as follows.
In Section~\ref{sec:attainable_region}, we establish the upper boundary of the full attainable $(\xi,\psi)$-region via convex optimization over $h$.
Together with the relation \eqref{eq:xi-psi-relation}, we derive in Theorem~\ref{thm:si_region} the exact region for the pair \((\xi(C), \psi(C))\) over the space \(\CC_{\text{SI}}\) consisting of all \emph{stochastically increasing (SI)} copulas (defined in \eqref{eq:si_definition} below).
In Section~\ref{sec:lower_boundary}, we discuss the minimal possible value of $\psi$ for given $\xi$.
If $\xi\ge\frac12$, then this minimal possible value is just $\psi=-\frac12$.
For the case of $\xi\le\frac12$, we give a lower bound based on Jensen's inequality and a convex minimization.
Furthermore, inspired from discrete approximations, we construct a two-parameter copula family that yields particularly small values for $\psi$ when $\xi$ is given.
The key results from both sections are visualized in Figure \ref{fig:attainable_region}.

\section{Upper boundary and SI region}
\label{sec:attainable_region}
The upper boundary for the attainable $(\xi,\psi)$-region is fully characterized by the following theorem.

\begin{theorem}[Maximal value of $\psi$ given $\xi$]\label{thm:exact_region_upper_bound}
For $x\in[0,1]$, it is
\begin{align}\label{eq:exact_region_upper_bound}
    \max\{\psi(C) \mid C \in \CC,~\xi(C) = x\} = \sqrt{x}
\end{align}
and the maximizer is uniquely attained by the Fréchet copula \(C^{\text{Fr}}_\alpha(u,v) := (1-\alpha)uv + \alpha \min(u,v)\) with \(\alpha = \sqrt{x}\).
\end{theorem}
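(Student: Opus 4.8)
The plan is to turn both functionals into $L^2([0,1]^2)$-inner products against the single function $h(t,v)=\partial_1 C(t,v)$ and then reduce the whole statement to one application of the Cauchy–Schwarz inequality. Since the independence copula $\PiC$ has $\partial_1\PiC(t,v)=v$, I would substitute $h(t,v)=v+g(t,v)$, so that condition (ii) of Lemma~\ref{charSIcop}, $\int_0^1 h_v(t)\,\de t=v$, becomes the homogeneous linear constraint $\int_0^1 g(t,v)\,\de t=0$ for a.e.\ $v$. Denote by $V\subseteq L^2([0,1]^2)$ the corresponding closed subspace and write $\langle f_1,f_2\rangle=\int_0^1\!\int_0^1 f_1 f_2\,\de t\,\de v$. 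Using $\int_0^1 g(t,v)\,\de t=0$ to annihilate the cross terms, the definition \eqref{eq:xi-definition} and the expression \eqref{eq:psi-in-terms-of-h} collapse to
\[
\xi(C)=6\int_0^1\!\int_0^1 g(t,v)^2\,\de t\,\de v=6\|g\|^2, \qquad \psi(C)=6\int_0^1\!\int_0^1 \1_{\{t\le v\}}\,g(t,v)\,\de t\,\de v=6\,\langle \1_{\{t\le v\}},g\rangle .
\]
Thus the constraint $\xi(C)=x$ fixes $\|g\|^2=x/6$, while $\psi(C)$ is a linear functional of $g$.

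Because every $g\in V$ is orthogonal to all functions of $v$ alone, I would replace $\1_{\{t\le v\}}$ by its orthogonal projection onto $V$, namely $P(t,v):=\1_{\{t\le v\}}-v$ (subtracting the $t$-mean $\int_0^1\1_{\{s\le v\}}\,\de s=v$), so that $\psi(C)=6\,\langle P,g\rangle$. Cauchy–Schwarz then gives $\psi(C)\le 6\,\|P\|\,\|g\|$. A short computation yields $\int_0^1(\1_{\{t\le v\}}-v)^2\,\de t=v(1-v)$ and hence $\|P\|^2=\int_0^1 v(1-v)\,\de v=\tfrac16$; inserting $\|g\|^2=x/6$ gives $\psi(C)\le 6\cdot\tfrac{1}{\sqrt6}\cdot\tfrac{\sqrt x}{\sqrt6}=\sqrt x$, which is the claimed upper bound.

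For attainment and uniqueness, the equality case of Cauchy–Schwarz forces $g=\lambda P$ with $\lambda\ge 0$, and the norm constraint fixes $\lambda=\sqrt x$. This produces $h(t,v)=v+\sqrt x\,(\1_{\{t\le v\}}-v)=(1-\sqrt x)\,v+\sqrt x\,\1_{\{t\le v\}}$, which is precisely $\partial_1 C^{\text{Fr}}_\alpha$ for $\alpha=\sqrt x$; integrating in $t$ recovers the Fréchet copula. I would then check that this $g$ is actually feasible, i.e.\ that the associated $h$ meets the remaining requirements of Lemma~\ref{charSIcop}: the range condition $h\in[0,1]$ (treating $t\le v$ and $t>v$ separately) and monotonicity of $v\mapsto h(t,v)$ (both the affine part $(1-\sqrt x)v$ and the upward jump of the indicator are non-decreasing once $\sqrt x\in[0,1]$, which holds for $x\in[0,1]$). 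Uniqueness is then immediate: $g=\sqrt x\,P$ is the unique maximizer of the linear functional $\langle P,\cdot\rangle$ on the sphere $\{g\in V:\|g\|^2=x/6\}$, so no other copula with $\xi=x$ can reach $\psi=\sqrt x$.

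The subtle point — more a matter of bookkeeping than a true obstacle — is that the Cauchy–Schwarz bound is derived over a relaxation: only the marginal constraint (ii) and the value of $\xi$ enter, whereas the range condition $h\in[0,1]$ and the monotonicity condition (iii) are dropped. A priori the relaxed maximum could exceed the copula maximum, but the argument closes exactly because the relaxed maximizer $g=\sqrt x\,P$ happens to satisfy the dropped conditions and therefore corresponds to a genuine copula. The only care needed is to carry out these feasibility checks and to record that $\sqrt x\in[0,1]$, which is what makes the extremal $h$ a valid copula density integral rather than merely a critical point of the relaxed problem.
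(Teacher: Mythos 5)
Your proof is correct, but it takes a genuinely different route from the paper. Both arguments share the same outer logic: relax the problem by keeping only the marginal constraint $\int_0^1 h(t,v)\,\de t=v$ and the $\xi$-constraint, solve the relaxation, and then close the gap by checking that the relaxed extremizer satisfies the dropped box and monotonicity conditions of Lemma~\ref{charSIcop}, hence is a genuine copula derivative. Where you differ is in how the relaxation is solved. The paper sets it up as an infinite-dimensional convex program and invokes the Banach-space KKT machinery from the appendix: a Lagrangian with multipliers $(\mu^*,\gamma^*)$, stationarity via Fréchet derivatives, complementary slackness, a second-order quadratic-growth condition (Lemma~\ref{lem:BS-3.63}), and a separate convexity lemma to upgrade the strict local minimizer to the unique global one. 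You instead center at the independence copula by writing $h(t,v)=v+g(t,v)$, observe that the constraint subspace $V=\{g:\int_0^1 g(t,v)\,\de t=0 \text{ a.e.}\}$ is exactly the orthogonal complement of functions of $v$ alone, so that $\xi=6\|g\|_{L^2}^2$ and $\psi=6\langle \1_{\{t\le v\}}-v,\,g\rangle_{L^2}$, and finish with one application of Cauchy--Schwarz together with $\|\1_{\{t\le v\}}-v\|_{L^2}^2=\tfrac16$. This buys a fully self-contained and elementary proof in which uniqueness is immediate from the equality case of Cauchy--Schwarz, with no need for the appendix at all; it also makes transparent where $\sqrt{x}$ comes from ($\xi$ quadratic, $\psi$ linear in the centered variable $g$). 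What the paper's heavier machinery buys in exchange is reusability: the same KKT framework carries over to the lower-boundary problem of Theorem~\ref{thm:lower_boundary_mu_half}, where the box constraints $0\le h\le 1$ become active and a pure inner-product argument would no longer suffice. Your feasibility bookkeeping (range of $h$ on $\{t\le v\}$ and $\{t>v\}$, monotonicity in $v$ requiring $\sqrt{x}\le 1$) is exactly the right closing step, and your handling of the degenerate case $x=0$ is consistent with the paper's.
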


\begin{proof}
Using \eqref{eq:xi-definition} and \eqref{eq:psi-in-terms-of-h}, we formulate and solve an optimization problem that yields the upper boundary \(\psi = \sqrt{\xi}\) as follows:
\begin{align}
    \label{eq:upper_boundary_problem}
    \begin{aligned}
    \text{minimize} \quad &-\int_0^1\int_0^1 \1_{\{t\le v\}} h(t,v) \de t \de v \\
    \text{subject to} \quad
    & 6 \int_0^1\int_0^1 h(t,v)^2 \de t \de v - 2 \le x, \quad \int_0^1 h(t,v) \de t = v \text{ for a.e.~} v \in (0,1)
    \end{aligned}
\end{align}
for given \(x \in (0,1]\) over $h\in L^2((0,1)^2)$.
The first constraint \(6 \int_0^1\int_0^1 h(t,v)^2 \de t \de v - 2 \le x\) ensures that Chatterjee's rank correlation is upper bounded by \(x\).
By Lemma~\ref{charSIcop}, the second constraint is satisfied by all copula partial derivatives.
Thus, provided that the solution to \eqref{eq:upper_boundary_problem} in fact constitutes the first partial derivatives of a copula and satisfies the $\xi$-constraint with equality, it certainly attains the maximum in \eqref{eq:exact_region_upper_bound}.
Note that we made the constraint on $\xi$ an inequality so that the optimization problem is convex, which enables us to use second-order sufficient conditions for optimality, and we thus only need to find a local minimizer.
Also note that the case of $x=0$ is trivially solved by $h(t,v) = v$, which corresponds to the independence copula $\Pi$.

To solve the general minimization problem, we use the framework from the appendix.
More precisely, we let
\[
X=L^2((0,1)^2), \quad Y=\mathbb{R}\times L^2(0,1), \quad
f(h)= -\iint \1_{\{t\le v\}}\,h(t,v)\de t\de v,
\quad K = (-\infty,0]\times\{0\},
\]
and define the constraint operator
\(
  G(h)
  =\Bigl(
      6\iint h^2(t,v)\de t\de v - 2 - x,\;
      \bigl[v\mapsto \int_0^1 h(t,v)\de t - v\bigr]
    \Bigr).
\)
The stationarity condition in Lemma~\ref{lem:KKT} results from setting the Fréchet derivative of the Lagrangian
\begin{equation}\begin{multlined}\label{eq:lagrangian_upper_boundary}
\mathcal{L}(h, (\mu,\gamma)) = -\int_0^1\int_0^1 \1_{\{t\le v\}} h(t,v) \de t \de v \\
+ \mu \left( 6 \iint h^2(t,v) \de t \de v - 2 - x \right) 
+ \int_0^1 \gamma(v) \left( \int_0^1 h(t,v) \de t - v \right) \de v
\end{multlined}\end{equation}
to zero.
Let $k\in L^2((0,1)^2)$ and consider 
$\phi(\varepsilon)=\mathcal{L}(h+\varepsilon k,(\mu,\gamma))$.
Differentiating at $\varepsilon=0$ gives
\[
\mathrm{D}\mathcal{L}(h)[k] 
= \int_0^1\int_0^1 \big(-\1_{\{t\le v\}}+12\mu h(t,v)+\gamma(v)\big)\,k(t,v)\de t\de v.
\]
The KKT conditions from Lemma~\ref{lem:KKT} therefore read as follows:
\begin{enumerate}[(i)]
    \item \emph{Stationarity:} $-\1_{\{t\le v\}} + 12\mu h(t,v) + \gamma(v) = 0$ a.e.\ on $(0,1)^2$.
    \item \emph{Primal Feasibility:} $h$ satisfies the constraints in \eqref{eq:upper_boundary_problem}.
    \item \emph{Dual Feasibility:} $\mu\ge 0$.
    \item \emph{Complementary Slackness:} $\mu\,(6 \iint h^2 - 2 - x)=0$.
\end{enumerate}
We check that the candidate solution
\[
    h^*_v(t) := (1-\sqrt{x})v + \sqrt{x}\1_{\{t \le v\}},
    \quad\mu^* := \frac{1}{12\sqrt{x}},
    \quad\gamma^*(v) := -\frac{1-\sqrt{x}}{\sqrt{x}}v
\]
satisfies all KKT conditions.
First, by construction, \(h^*\) is the first partial derivative of the copula \(C^{\text{Fr}}(u,v;x)\).
A direct calculation shows that \(\xi(C^{\text{Fr}}) = (\sqrt{x})^2 = x\) (see also \cite[Example 4]{fuchs2024quantifying}), so the \(\xi\)-constraint is satisfied and active, consistent with \(\mu^* > 0\); dual feasibility and complementary slackness follow.
Furthermore, the second feasibility condition naturally holds for a copula derivative.
Lastly, it holds that $-\1_{\{t\le v\}} + \gamma^* + 12\mu^* h^* = 0$ since
\[
    -\1_{\{t \le v\}}-\frac{1-\sqrt{x}}{\sqrt{x}}v + 12\left(\frac{1}{12\sqrt{x}}\right)\left((1-\sqrt{x})v + \sqrt{x}\1_{\{t \le v\}}\right)
    = 0,
\]
thereby confirming stationarity.
Hence, all KKT conditions are indeed satisfied.

Next, regarding sufficiency of the KKT conditions, note that the Hessian of the Lagrangian is determined solely by the quadratic \(\xi\)-constraint and satisfies
\begin{align}\label{eq:sosc}
    D_{hh}^2 \mathcal{L}(h^*, (\mu^*,\gamma^*))[k,k]
    = 12\mu^* \|k\|_{L^2}^2
    = \frac{1}{\sqrt{x}} \|k\|_{L^2}^2
\end{align}
for $k\in L^2((0,1)^2)$.
\eqref{eq:sosc} satisfies the quadratic growth condition from Lemma~\ref{lem:BS-3.63}, proving that \(h^*\) is a strict local minimizer.
Since the optimization problem is convex, the local minimizer \(h^*\) is also the unique global minimizer, cf.~\cite[Lem.~4.6]{ansari2025exact}.
The copula \(C^{\text{Fr}}\) generated by \(h^*\) is therefore the unique solution to the optimization problem, and it uniquely achieves the upper boundary \(\psi = \sqrt{\xi}\).
\end{proof}

From Theorem~\ref{thm:exact_region_upper_bound}, it is immediate that $C^{\text{Fr}}_{1/2}$ uniquely maximizes $\psi-\xi$ over all copulas.
Table \ref{tab:footrule_minus_xi_max} compares the maximizers of $\psi-\xi$ for a selection of classical copula families.

\begin{table}[htbp]
  \centering
  \begin{tabular}{lrrrr}
    \toprule
    Family & Parameter & $\xi$ & $\psi$ & $\psi-\xi$ \\
    \midrule
    Clayton        &      1.459 &    0.251 &     0.407 &      0.156 \\
    Frank          &      4.5   &    0.229 &     0.408 &      0.179 \\
    Fréchet        &      0.5   &    0.25  &     0.5  &      0.25  \\
    Gaussian       &      0.614 &    0.225 &     0.397 &      0.172 \\
    Gumbel-Hougaard &      1.781 &    0.249 &     0.424 &      0.174 \\
    Joe            &      1.796 &    0.407 &     0.529 &      0.122 \\
    \bottomrule
  \end{tabular}
  \caption{
    Parameter values that approximately maximize the gap $\psi-\xi$ for the listed copula families together with the corresponding values of $\xi$, $\psi$, and their difference.
    Except for the Fréchet and the Gaussian copula families, where closed-form formulas are available for both rank correlation measures, the entries are obtained by a dense grid search for the parameter and numerical approximations of $\xi$ and $\psi$.
  }
  \label{tab:footrule_minus_xi_max}
\end{table}

As another consequence of this theorem, we obtain the exact $(\xi,\psi)$-region for SI copulas.
A copula \(C\) is said to be \emph{stochastically increasing} (SI) if for all $v\in[0,1]$ the function
\begin{align}\label{eq:si_definition}
u \mapsto \partial_1 C(u,v)
\end{align}
is non-increasing in $u$ outside a Lebesgue-null set.
We denote the class of all SI copulas by \(\CC_{\mathrm{SI}}\), for which a tighter region between \(\xi\) and \(\psi\) is given in Theorem~\ref{thm:si_region}.
For its lower boundary, let $\{(a_k,b_k)\}_{k\in I}$ be a countable family of disjoint intervals in $(0,1)$ and let $\{C_k\}_{k\in I}$ be a family of copulas.
The \emph{ordinal sum} of the $C_k$ with respect to the intervals $(a_k,b_k)$ is the copula $(\langle a_k, b_k \rangle, C_k)_{k\in I}$ defined by
\begin{align}\label{eq:ordinal_sum}
(\langle a_k, b_k \rangle, C_k)_{k\in I}  :=
\begin{cases}
a_k + (b_k - a_k)\;
C_k\!\Bigl(\tfrac{u - a_k}{\,b_k - a_k\,},\;\tfrac{v - a_k}{\,b_k - a_k\,}\Bigr),
& (u,v)\in (a_k,b_k)^2,\;k\in I,\\[6pt]
\MC(u,v),
& \text{otherwise,}
\end{cases}
\end{align}
where $\MC$ is again the upper Fréchet bound.
In particular, if $C_k=\Pi$ for all $k\in I$, then $(\langle a_k, b_k \rangle, \Pi)_{k\in I}$ is called an \emph{ordinal sum of $\Pi$}.
In the following proposition, we characterize the class of SI copulas that satisfy the equality \(\xi = \psi\).

\begin{proposition}[Equality case $\xi=\psi$ in the SI class]\label{prop:si_equality_structure}
Let $C\in\CC_{\mathrm{SI}}$ and set $h_v(t):=\partial_1 C(t,v)$.
The following are equivalent:
\begin{enumerate}
\item[\textnormal{(i)}] $\xi(C)=\psi(C)$.
\item[\textnormal{(ii)}] There exist non-decreasing, measurable functions $A,B:[0,1]\to[0,1]$ and a measurable function $\alpha:[0,1]\to[0,1]$ such that for a.e.\ $v\in[0,1]$, the function $h_v$ is given for a.e.\ $t\in[0,1]$ by
\begin{align}\label{eq:level_two_form}
  h_v(t)=\1_{\{t<A(v)\}}+\alpha(v)\,\1_{(A(v),\,B(v))}(t).
\end{align}
\end{enumerate}
\end{proposition}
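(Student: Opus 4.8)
The plan is to reduce the equality $\xi(C)=\psi(C)$ to a pointwise (in $v$) statement about the non-increasing slices $h_v$, prove a sharp pointwise inequality by a single-crossing argument, and then read off the structure in the equality case. Throughout I would use that, since $C\in\CC_{\mathrm{SI}}$, each $h_v$ is non-increasing in $t$, while Lemma~\ref{charSIcop} gives $\int_0^1 h_v(t)\de t=v$ and monotonicity of $h_v(t)$ in $v$.

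First I would subtract the two functionals. Using \eqref{eq:xi-definition}, \eqref{eq:psi-in-terms-of-h} and $h_v(t)=\partial_1 C(t,v)$, one writes
\[
\psi(C)-\xi(C)=6\int_0^1 I(v)\de v,\qquad I(v):=\int_0^1\Big(\1_{\{t\le v\}}h_v(t)-h_v(t)^2\Big)\de t .
\]
The heart of the proof is the pointwise claim: for every non-increasing $g\colon[0,1]\to[0,1]$ with $\int_0^1 g=v$ one has $I(v)\ge 0$, with equality iff $g$ has the three-level shape in \eqref{eq:level_two_form}. (As a side benefit this re-proves $\xi\le\psi$ on $\CC_{\mathrm{SI}}$.) For the inequality I would rewrite $I(v)=\int_0^1 g(t)\big(\1_{[0,v]}(t)-g(t)\big)\de t$ and exploit $\int_0^1(\1_{[0,v]}-g)=0$, which holds because $\int_0^1 g=v$, to subtract any constant $c$ and obtain $I(v)=\int_0^1 (g(t)-c)\big(\1_{[0,v]}(t)-g(t)\big)\de t$. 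Choosing $c$ between the one-sided limits of $g$ at $v$, the factor $\1_{[0,v]}-g$ is $\ge 0$ on $[0,v]$ and $\le 0$ on $(v,1]$, while $g-c$ has the same sign pattern by monotonicity; hence the integrand is pointwise $\ge 0$ and $I(v)\ge 0$. In the equality case the integrand vanishes a.e., so a.e.\ either $g=c$, or $g=1$ on $[0,v]$, or $g=0$ on $(v,1]$; combined with monotonicity this forces $g=1$ on $[0,A)$, $g=c$ on $(A,B)$, $g=0$ on $(B,1]$ for some $0\le A\le v\le B\le 1$, i.e.\ exactly \eqref{eq:level_two_form} with $\alpha=c$.

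With the pointwise claim established, (ii)$\Rightarrow$(i) is immediate: plugging \eqref{eq:level_two_form} into $I$ and using $A+\alpha(B-A)=v$ (forced by $\int_0^1 h_v=v$) gives $I(v)=0$ for a.e.\ $v$, hence $\psi(C)-\xi(C)=0$. For (i)$\Rightarrow$(ii), $\xi=\psi$ yields $\int_0^1 I=0$, and since $I\ge 0$ a.e.\ we get $I(v)=0$ for a.e.\ $v$, so each $h_v$ has the claimed form.

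The hard part will be upgrading this a.e.-in-$v$ representation to genuinely non-decreasing measurable functions $A,B$ and measurable $\alpha$, since the representation is non-unique exactly in the degenerate cases $\alpha\in\{0,1\}$ (where the middle interval collapses). I would avoid the ambiguity by the canonical choice $A(v):=\big|\{t:h_v(t)=1\}\big|$ and $B(v):=\big|\{t:h_v(t)>0\}\big|$, setting $\alpha(v):=(v-A(v))/(B(v)-A(v))$ where $A(v)<B(v)$ and arbitrarily otherwise; a short check shows this reproduces \eqref{eq:level_two_form} in every case. Because $h_v(t)$ is non-decreasing in $v$ (Lemma~\ref{charSIcop}), the level sets $\{t:h_v(t)=1\}$ and $\{t:h_v(t)>0\}$ are non-decreasing in $v$, so $A$ and $B$ are non-decreasing, hence measurable, and $\alpha$ is measurable as a quotient of measurable functions; this closes the argument.
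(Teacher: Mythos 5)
Your proposal is correct, and while its overall skeleton matches the paper's (reduce $\psi(C)-\xi(C)\ge 0$ to a pointwise-in-$v$ inequality for the non-increasing slices $h_v$, characterize the equality case, then upgrade to monotone measurable $A,B$), the key technical step is handled by a genuinely different and more self-contained argument. The paper imports the pointwise inequality $\int_0^1 h_v^2\,\mathrm{d}t\le \int_0^v h_v\,\mathrm{d}t$ from Siburg--Stoimenov and characterizes equality via Lebesgue--Stieltjes integration by parts against the decrease measure $\mu_v$ of $h_v$, obtaining the support condition $\operatorname{supp}\mu_v\subseteq\{t:C(t,v)=M(t,v)\}$; you instead subtract a constant $c$ lying between the one-sided limits of $g=h_v$ at $v$ (legitimate because $\int_0^1(\1_{[0,v]}-g)=0$) and observe that $(g-c)(\1_{[0,v]}-g)\ge 0$ pointwise by the sign pattern on either side of $v$, which proves the inequality and the equality characterization in one stroke, with no citation and no Stieltjes calculus. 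What the paper's route buys is the structural link to the contact set $\{C(\cdot,v)=M(\cdot,v)\}$, which it then reuses to define $A(v)=\sup\{t\le v: C(t,v)=t\}$ and $B(v)=\inf\{t\ge v: C(t,v)=v\}$ and get measurability from continuity of $C$; your route buys elementarity and a free re-proof of $\xi\le\psi$ on $\CC_{\mathrm{SI}}$, and your alternative canonical choice $A(v)=\lvert\{t:h_v(t)=1\}\rvert$, $B(v)=\lvert\{t:h_v(t)>0\}\rvert$ handles the measurability and monotonicity upgrade equally well (the level sets are monotone in $v$ up to null sets by Lemma~\ref{charSIcop}(iii), which is all that is needed for their measures to be monotone). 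Two trivial points you gloss over, which the paper also treats lightly: $h_v$ is only a.e.\ non-increasing, so one modifies it on a null set before taking one-sided limits; and the computation in (ii)$\Rightarrow$(i) uses $A(v)\le v\le B(v)$, which follows automatically from $v=A+\alpha(B-A)$ with $\alpha\in[0,1]$.
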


\begin{proof}
\emph{(i) $\Rightarrow$ (ii).}
For SI copulas, \cite[Thm.~4.2]{siburg2021stochastic} gives, for every $v\in[0,1]$,
\begin{equation}\label{eq:SI-pointwise}
 (C^\top\!\ast C)(v,v)=\int_0^1 h_v(t)^2\,dt \;\le\; \int_0^v h_v(t)\,dt \;=\; C(v,v).
\end{equation}
By the definitions of $\xi$ and $\psi$,
\[
\psi(C)-\xi(C)
=6\int_0^1 \!\Big(C(v,v)-(C^\top\!\ast C)(v,v)\Big)\,dv
=6\int_0^1\!\left(\int_0^v h_v(t)\,dt-\int_0^1 h_v(t)^2\,dt\right)dv\;\ge 0.
\]
Hence, $\xi(C)=\psi(C)$ forces equality in \eqref{eq:SI-pointwise} for a.e.\ $v$, i.e.~\(
  \int_0^1 h_v(t)^2\,dt=\int_0^v h_v(t)\,dt .
\)
Fix such a $v$.
Since $C(\cdot,v)\le M(\cdot,v)=\min(\cdot,v)$, define
\[
  G_v(t):=\int_0^t \bigl(\1_{[0,v]}(s)-h_v(s)\bigr)\,ds
  \;=\;\min(t,v)-C(t,v)\;\ge0,\qquad t\in[0,1].
\]
Because $t\mapsto h_v(t)$ is a.e.\;non-increasing, modify it on a null set to be non-increasing; then $h_v$ induces a finite nonnegative measure $\mu_v$ via $\de h_v=-\de\mu_v$.
Noticing that $G_v(0)=0$ and $G_v(1)=v-C(1,v)=0$, Lebesgue--Stieltjes integration by parts (see, e.g., \cite[Thm.~21.67 (iv)]{hewitt1975real}) yields
\begin{align}\label{eq:ibp}
  \int_0^1 \bigl(\1_{[0,v]}-h_v\bigr)h_v\,dt
  = \Big[G_vh_v\Big]_{0}^{1} - \int_0^1 G_v\,dh_v
  = \int_0^1 G_v\,d\mu_v \;\ge 0.
\end{align}
Thus, equality in \eqref{eq:SI-pointwise} holds if and only if
\(
  \operatorname{supp}\mu_v \subseteq \{t:\,C(t,v)=M(t,v)\}.
\)
Since $\mu_v$ is the decrease measure of $h_v$, this means $h_v$ has no decrease on the open set $\{C(\cdot,v)<M(\cdot,v)\}$, hence $h_v$ is a.e.\ constant on each connected component of that open set; on its
complement, $h_v=\partial_1 M(\cdot,v)$ equals $1$ (on $\{t<v\}$) or $0$ (on $\{t>v\}$).
As $h_v$ is non-increasing, there can be at most one such component, so there exist $A(v)\le B(v)$ and $\alpha(v)\in[0,1]$ with \eqref{eq:level_two_form}.
Finally, $h_{v_2}(t)\ge h_{v_1}(t)$ a.e.~forces $A,B$ to be non-decreasing in $v$.
Measurability of $A,B$ follows e.g.\ by defining
$A(v):=\sup\{t\le v:\,C(t,v)=t\}$ and $B(v):=\inf\{t\ge v:\,C(t,v)=v\}$, using continuity of $C$.

\smallskip
\emph{(ii) $\Rightarrow$ (i).}
By Lemma~\ref{charSIcop}, it holds $\int_0^1 h_v(t)\,dt=v$, so \eqref{eq:level_two_form} gives
\(
A(v)+\alpha(v)\bigl(B(v)-A(v)\bigr)=v
\)
for a.e.\ $v$.
Hence,
\[
\int_0^1 h_v(t)^2\,dt = A(v)+\alpha(v)^2\bigl(B(v)-A(v)\bigr)
\quad\text{and}\quad
\int_0^v h_v(t)\,dt = A(v)+\alpha(v)\bigl(v-A(v)\bigr),
\]
which are equal because $\alpha(v)\bigl(B(v)-A(v)\bigr)=v-A(v)$.
Therefore, $(C^\top\!\ast C)(v,v)=\int_0^1 h_v^2=C(v,v)$ for a.e.\ $v$, and integrating over $v$ yields
\[
\xi(C)=6\!\int_0^1\!\!\int_0^1 h_v(t)^2\,dt\,dv-2
=6\!\int_0^1\!(C^\top\!\ast C)(v,v)\,dv-2
=6\!\int_0^1\! C(v,v)\,dv-2
=\psi(C).\qedhere
\]
\end{proof}

\begin{remark}
The class in \eqref{eq:level_two_form} contains the ordinal sums of $\Pi$ and \cite[Thm.~5.1]{siburg2021stochastic} implies that ordinal sums of $\Pi$ are in fact the only \emph{symmetric} SI copulas satisfying $\xi=\psi$.
However, further (asymmetric) SI copulas with $\xi=\psi$ do exist, for example letting $A(v)=v/2$ and $B(v)=(v+1)/2$, the function
\[
\partial_1 C(t,v)
= \1_{\{t< v/2\}}
+ v\,\1_{(v/2,\,(v+1)/2)}(t),
\qquad (t,v)\in[0,1]^2,
\]
constitutes the first partial derivative of an SI copula $C$. 
\end{remark}

Let now
\(
\mathcal{R}_{\text{SI}} := \{ (\xi(C), \psi(C)) \mid C \in \CC_{\text{SI}} \}
\)
denote the attainable $(\xi,\psi)$-region for stochastically increasing copulas.
$\mathcal{R}_{\text{SI}}$ is explicitly characterized in the following theorem.

\begin{theorem}[Attainable $(\xi,\psi)$-region for SI Copulas]
    \label{thm:si_region}
It holds that
\[
    \mathcal{R}_{\text{SI}} = \{ (x,y) \in [0,1]^2 \mid x \le y \le \sqrt{x} \}.
\]
The lower bound $x=y$ is attained by ordinal sums of $\Pi$ and the upper bound $y=\sqrt{x}$ uniquely by the Fréchet copula family $(C^{\text{Fr}}_\alpha)_{\alpha \in [0,1]}$.
\end{theorem}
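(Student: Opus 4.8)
The plan is to prove the two set inclusions separately, writing $T:=\{(x,y)\in[0,1]^2\mid x\le y\le\sqrt{x}\}$ for the target region. The inclusion $\mathcal{R}_{\text{SI}}\subseteq T$ is essentially already available. The upper inequality $\psi(C)\le\sqrt{\xi(C)}$ holds for \emph{every} copula by Theorem~\ref{thm:exact_region_upper_bound}, since that theorem says the maximal $\psi$ at a given $\xi=x$ equals $\sqrt{x}$. The lower inequality $\xi(C)\le\psi(C)$ for $C\in\CC_{\mathrm{SI}}$ is precisely the integrated form of \eqref{eq:SI-pointwise}, already recorded in the proof of Proposition~\ref{prop:si_equality_structure} as $\psi(C)-\xi(C)=6\int_0^1\bigl(C(v,v)-(C^\top\!\ast C)(v,v)\bigr)\de v\ge 0$. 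Together with $\xi(C)\in[0,1]$, these confine every SI pair to $T$.

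For the reverse inclusion $T\subseteq\mathcal{R}_{\text{SI}}$ the key idea is to \emph{fix the value of $\psi$ rather than of $\xi$}, exploiting that $\psi$ is a linear functional of $h$ whereas $\xi$ is quadratic. Fix $p\in[0,1]$ and exhibit two SI copulas on the level set $\{\psi=p\}$: the Fréchet copula $C^{\text{Fr}}_p$, which is SI because $\partial_1 C^{\text{Fr}}_\alpha(u,v)=(1-\alpha)v+\alpha\1_{\{u\le v\}}$ is non-increasing in $u$, and which satisfies $(\xi,\psi)=(p^2,p)$; and the ordinal sum of $\Pi$ over the single block $(0,\lambda)^2$ with $\lambda=\sqrt{1-p}$, call it $C^{\mathrm{os}}$, for which a direct computation gives $\psi=1-\lambda^2=p$ and which, being an ordinal sum of $\Pi$ (hence in the class \eqref{eq:level_two_form} of SI copulas with $\xi=\psi$ by the Remark following Proposition~\ref{prop:si_equality_structure}), satisfies $\xi=\psi=p$. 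Writing $h^{\text{Fr}}:=\partial_1 C^{\text{Fr}}_p$ and $h^{\mathrm{os}}:=\partial_1 C^{\mathrm{os}}$, I would then consider the convex path
\[
C_t:=(1-t)C^{\text{Fr}}_p+t\,C^{\mathrm{os}},\qquad t\in[0,1].
\]
Since a convex combination of SI copulas is again an SI copula (non-increasingness of $u\mapsto\partial_1 C(u,v)$ is preserved under averaging), each $C_t\in\CC_{\mathrm{SI}}$. By linearity of $\psi$ one has $\psi(C_t)=p$ for all $t$, while $t\mapsto\xi(C_t)=6\bigl\lVert(1-t)h^{\text{Fr}}+t\,h^{\mathrm{os}}\bigr\rVert_{L^2}^2-2$ is continuous with boundary values $\xi(C_0)=p^2$ and $\xi(C_1)=p$. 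The intermediate value theorem thus provides, for each $x\in[p^2,p]$, an SI copula with $(\xi,\psi)=(x,p)$; as $p$ ranges over $[0,1]$ the pairs $(x,p)$ with $p^2\le x\le p$ — equivalently $x\le p\le\sqrt{x}$ — sweep out all of $T$.

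The boundary assertions fall out of the same construction: the upper boundary $y=\sqrt{x}$, together with its uniqueness, is inherited directly from Theorem~\ref{thm:exact_region_upper_bound} (the unique maximizer $C^{\text{Fr}}_{\sqrt{x}}$ is itself SI, so it remains the unique attainer within $\CC_{\mathrm{SI}}$), and the lower boundary $y=x$ is realized by exactly the ordinal sums of $\Pi$ used above, whose footrule $1-\lambda^2$ runs through all of $[0,1]$ as $\lambda\in[0,1]$. I expect the genuinely substantive step to be the filling argument rather than any single computation: the crucial realization is that one should slice the region along level sets of the \emph{linear} functional $\psi$, on which $\xi$ is forced to range over the full connected interval $[p^2,p]$ by a single convex path between the two extremal families, instead of slicing along the level sets $\{\xi=x\}$ of the nonconvex quadratic constraint, where no such elementary interpolation is available. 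The only routine checks left are that the Fréchet and single-block ordinal-sum families are indeed SI and carry the stated $(\xi,\psi)$-values.
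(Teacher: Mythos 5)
Your proposal is correct and takes essentially the same route as the paper's own proof: the upper boundary and its uniqueness are imported from Theorem~\ref{thm:exact_region_upper_bound} (noting the Fréchet maximizers are SI), the lower bound $\xi\le\psi$ comes from the pointwise SI inequality \eqref{eq:SI-pointwise}, and the interior is filled by convex combinations of a single-block ordinal sum of $\Pi$ and a Fréchet copula along a level set of the linear functional $\psi$, invoking continuity of the quadratic $\xi$ along the segment and the intermediate value theorem. The only cosmetic difference is that you justify $\xi=\psi$ for ordinal sums of $\Pi$ via Proposition~\ref{prop:si_equality_structure} and the subsequent remark, whereas the paper cites symmetry and idempotence from \cite[Thm.~5.1]{siburg2021stochastic}.
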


\begin{proof}
The upper boundary $y=\sqrt{x}$ is the same as in Theorem~\ref{thm:exact_region_upper_bound}, since the maximizing Fréchet copulas are all stochastically increasing.
For the lower boundary, note that \((C^\top\ast C)(v,v) \leq C(v,v)\) for all \(v\in[0,1]\) was shown for SI copulas in \cite[Thm.~4.2]{siburg2021stochastic}.
Consequently, as Spearman's footrule is consistent with this pointwise ordering, one has
\begin{align}\label{eq:xi_le_psi}
\xi(C)
= \psi(C^\top \ast C)
\le \psi(C).
\end{align}
By \cite[Thm.~5.1]{siburg2021stochastic}, every ordinal sum of $\Pi$ is \emph{symmetric} and \emph{idempotent}, i.e.\ $C=C^\top$ and $C*C=C$, and therefore attains equality in \eqref{eq:xi_le_psi}.
This traces the full diagonal, as for example the ordinal sums $(\langle 0, \alpha\rangle, \PiC)$ for $\alpha \in [0,1]$ range continuously from $\MC$ to $\PiC$, and hence attains all possible values of $\psi$ between $0$ and $1$.

To show that a point $(x, y)$ in the interior of $\mathcal{R}_{\text{SI}}$ is attained, consider SI copulas $C_0$ and $C_1$ on the border with
\(
    (\xi(C_0), \psi(C_0)) = (y, y) \text{ and } (\xi(C_1), \psi(C_1)) = (y^2, y),
\)
where $C_0$ exists due to the above and $C_1$ due to the observation that the Fréchet copula family achieving the upper bound in Theorem~\ref{thm:exact_region_upper_bound} is SI.
The set of SI copulas is convex, hence $C_\lambda = (1-\lambda)C_0 + \lambda C_1$ is an SI copula for any $\lambda \in [0,1]$.
Furthermore, since $\psi$ is a linear functional of the copula,
\(
\psi(C_\lambda) = y.
\)
Let $f(\lambda) := \xi(C_\lambda)$, then $f(0) = \xi(C_0) = y$ and $f(1) = \xi(C_1) = y^2$.
Writing $h_i:=\partial_1 C_i$ and $h_\lambda:=(1-\lambda)h_0+\lambda h_1$, we obtain the explicit quadratic
\begin{align}\label{eq:xi_convex_continuity}
\xi(C_\lambda)=6\|h_\lambda\|_{L^2}^2-2
=6\bigl((1-\lambda)^2\|h_0\|_{L^2}^2+2\lambda(1-\lambda)\langle h_0,h_1\rangle_{L^2}+\lambda^2\|h_1\|_{L^2}^2\bigr)-2,
\end{align}
which is continuous in $\lambda$, and thus $f$ must take on every value between $y^2$ and $y$.
Since necessarily $y^2 < x < y$, this means that there exists a $\lambda \in (0,1)$ such that $\xi(C_\lambda)=x$, as desired.
\end{proof}

We obtain the following inequality between Chatterjee's rank correlation and Kendall's tau, where the latter is defined as
\(
\tau(C):= 1 - 4 \int_0^1 \int_0^1 \partial_1 C(u,v) \partial_2 C(u,v) \de u \de v.
\)

\begin{corollary}
    If $C\in\CC_{\text{SI}}$, then
    \(
      \xi(C) \le \frac34 \tau(C) + \frac14
    \).
\end{corollary}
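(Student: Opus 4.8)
The plan is to factor the claimed bound through Spearman's footrule, writing it as the chain $\xi(C)\le\psi(C)\le\frac34\tau(C)+\frac14$. The first inequality is already at hand: for $C\in\CC_{\mathrm{SI}}$ the pointwise estimate $(C^\top\!\ast C)(v,v)\le C(v,v)$ from \cite[Thm.~4.2]{siburg2021stochastic} integrates to $\xi(C)\le\psi(C)$, exactly as in \eqref{eq:xi_le_psi}. Thus the whole corollary reduces to the single \emph{concordance} inequality $\psi(C)\le\frac34\tau(C)+\frac14$, which I would isolate as a separate lemma and treat on its own.

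Next I would put this residual inequality into a workable form. Substituting \eqref{eq:psi-definition} and the definition of $\tau$ and simplifying, $\psi\le\frac34\tau+\frac14$ is equivalent to $2\int_0^1 C(t,t)\de t+\int_0^1\int_0^1\partial_1 C(u,v)\,\partial_2 C(u,v)\de u\de v\le 1$. Probabilistically, writing $(U,V)\sim C$ and using $\int_0^1 C(t,t)\de t=\mathbb{E}[\min(U,V)]$ together with $\int_0^1\int_0^1\partial_1 C\,\partial_2 C\de u\de v=\tfrac{1-\tau}{4}$, this is precisely $\mathbb{E}[\min(U,V)]\le\frac{3+\tau}{8}$. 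This is the upper edge of the attainable $(\tau,\psi)$-region: it holds with equality at both $\MC$ (where $\tau=\psi=1$) and $\WC$ (where $\tau=-1$, $\psi=-\tfrac12$) and strictly at $\PiC$, which is a useful consistency check and already signals that it is a genuinely general, \emph{non-SI} bound.

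For the reduced inequality I see two routes. The cleaner one is to invoke the sharp upper bound of the footrule--Kendall's-tau region from the exact-region literature (e.g.\ along the lines of \cite{genest2010spearman,bukovvsek2023exact}), after which the corollary is immediate. Failing a directly citable statement, I would prove $\mathbb{E}[\min(U,V)]\le\frac{3+\tau}{8}$ by hand: rewrite it as $4\,\mathbb{E}[\min(U,V)]-1\le\mathbb{P}[\text{concordance}]$ for two independent copies $(U_1,V_1),(U_2,V_2)\sim C$, represent $\mathbb{E}[\min(U,V)]=\mathbb{P}[U_1>U_0,\,V_1>V_0]$ with $(U_0,V_0)\sim\MC$ an independent comonotone pair, and then compare this cross-concordance of $C$ against the independence copula's diagonal with the self-concordance of $C$ via an elementary coupling/rearrangement argument.

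I expect this concordance inequality $\psi\le\frac34\tau+\frac14$ to be the main obstacle, precisely because it cannot borrow the SI structure that powers $\xi\le\psi$: it must hold for all copulas and is tight at the non-SI copula $\WC$. Locating the right reference, or supplying the self-contained coupling proof and checking where the two links of the chain become simultaneously tight (one expects this to happen only at $\MC$, since for ordinal sums of $\PiC$ one has $\xi=\psi=\tau$ yet $\frac34\tau+\frac14>\tau$ unless $\tau=1$), is where the real work lies; the reduction steps themselves are routine given Theorem~\ref{thm:si_region}.
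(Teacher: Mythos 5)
Your proposal is correct and follows essentially the same route as the paper: the paper's proof is exactly the chain $\xi(C)\le\psi(C)$ (from Theorem~\ref{thm:si_region}) combined with the general concordance inequality $\psi\le\frac34\tau+\frac14$, which it obtains by citing \cite[Thm.~4]{bukovvsek2023exact} --- the very reference you flagged as the likely source. The ``main obstacle'' you anticipated is thus already settled in the literature, so your self-contained coupling argument for $\mathbb{E}[\min(U,V)]\le\frac{3+\tau}{8}$ is not needed, though your reformulation and tightness checks at $\MC$, $\WC$, and $\PiC$ are accurate.
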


\begin{proof}
    By applying \cite[Thm.~4]{bukovvsek2023exact} to $\xi\le\psi$ from Theorem~\ref{thm:si_region} above.
\end{proof}

\begin{remark}
\leavevmode
\begin{enumerate}[(a)]
    \item In \cite{ansari2023dependence}, it was conjectured that $\xi \le \tau$ holds for all SI copulas, but a proof of this stronger inequality is still outstanding.
    \item $\xi$ is actually known to be a continuous function on $\CC_{\text{SI}}$ by \cite[Cor.~3.6]{ansari2025continuity}, but below we use the path-continuity along $C_\lambda$ also for non-SI copulas, hence the explicit argument in \eqref{eq:xi_convex_continuity}.
    \item Recently, in \cite{fuchs2025exact}, the authors studied relationships between multiple measures of association for \emph{lower semilinear (LSL)} copulas.
    In particular, they showed that for LSL copulas it holds $\xi\le\psi$ and that this inequality is sharp.
    Noting that LSL copulas form a convex set and that the Fréchet copula family $(C^{\text{Fr}}_\alpha)_{\alpha\in[0,1]}$ is LSL, it follows that the exact $(\xi,\psi)$-region of LSL copulas $\mathcal{R}_{\text{LSL}}$ satisfies $\mathcal{R}_{\text{LSL}}= \mathcal{R}_{\text{SI}}$.
    \item A copula $C$ is said to be \emph{left tail decreasing (LTD)} if, for every $v \in (0,1]$, the function 
    \[
        u \mapsto \frac{C(u,v)}{u}
    \]
    is non-increasing.
    It is known that SI implies LTD, but the converse does not hold, see \cite[Thm.~5.2.12]{Nelsen-2006}.
    While $\xi \le \psi$ holds for all SI copulas, this inequality fails in the LTD class.
    For example, let
    \[
        \Delta = \tfrac1{12}\begin{pmatrix}
            4 & 0 & 0 \\
            0 & 1 & 3 \\
            0 & 3 & 1
        \end{pmatrix}
    \]
    and consider its associated checkerboard copula \(C^{\Delta}\).
    Explicit calculations then show that
    \(
        \xi(C^{\Delta}) = \frac12 > \frac13 = \psi(C^{\Delta}),
    \)
    see also \cite[Prop.~3.3]{rockel2025measures}.
    \item The exact $(\xi,\psi)$-region for SI copulas does not immediately yield the corresponding region for stochastically decreasing (SD) copulas (where $u \mapsto \partial_1 C(u,v)$ is non-decreasing in $u$ outside a Lebesgue-null set for all $v$), as $\psi$ is \emph{not} a measure of concordance in the sense of \cite[Def.~2.4.7]{Durante-2016}.
    In particular, the SD region would require an exact derivation of the lower boundary for the full attainable $(\xi,\psi)$-region.
\end{enumerate}
\end{remark}

\section{Lower boundary}
\label{sec:lower_boundary}

Deriving the lower boundary of the $(\xi,\psi)$-region is more involved than the upper boundary, and we do not obtain the precise solution to
\begin{align}\label{eq:lower_boundary_problem}
    \min_{C \in \CC} \left( \mu\psi(C) + \xi(C) \right)
\end{align}
for $\mu\ge 0$.
Nevertheless, in Section~\ref{sec:lower_bound_jensen}, we leverage Jensen's inequality to get a more tractable optimization problem, and obtain as a result a lower bound for the minimal possible value in \eqref{eq:lower_boundary_problem}.
Furthermore, we derive the unique copula minimizing Chatterjee's rank correlation amongst all copulas attaining the smallest possible value for Spearman's footrule $\psi=-0.5$.
Lastly, in Section~\ref{sec:two_param_copula}, we introduce a copula family with two parameters that achieves comparably small values for $\mu\psi(C) + \xi(C)$, but it remains unclear if it really attains the true lower boundary of the $(\xi,\psi)$-region.

\subsection{A lower bound by Jensen's inequality}
\label{sec:lower_bound_jensen}

We first define a family of functions $(C^{\searrow}_\mu)_{\mu\ge 0}$ for which we determine Chatterjee's rank correlation and Spearman's footrule explicitly.
We afterwards show that this family indeed provides a lower boundary for the exact $(\xi,\psi)$-region.
For $0\le\mu \le 2$, define the function $C^{\searrow}_\mu : [0,1]^2 \to [0,1]$ via its partial derivative
\begin{align}\label{eq:lower_bound_partial}
    h_\mu(t,v)
    = \partial_1 C^{\searrow}_\mu(t,v)
    := h_1(v)\1_{\{t \le v\}} + h_2(v)\1_{\{t > v\}},
\end{align}
where, with 
\[
    v_0 = \tfrac{\mu}{2+\mu}, 
    \qquad v_1 = \tfrac{2}{2+\mu},
\]
the pair $(h_1(v),h_2(v))$ is given by
\begin{align}\label{eq:lower_bound_h1h2}
    (h_1,h_2)(v) =
    \begin{cases}
        (0,\; \tfrac{v}{1-v}), & v \in [0,v_0], \\[6pt]
        \bigl(v - \tfrac{\mu}{2}(1-v),\; v + \tfrac{\mu}{2}v\bigr), & v \in (v_0,v_1], \\[6pt]
        (2 - \tfrac{1}{v},\; 1), & v \in (v_1,1].
    \end{cases}
\end{align}

Note that $(C^{\searrow}_\mu)_{\mu\ge 0}$ does not form a copula family, because $C^{\searrow}_\mu$ is not generally non-decreasing in its second argument.
Nevertheless, since for this family $(C^{\searrow}_\mu)_{\mu\ge 0}$ the first partial derivatives are well-defined by \eqref{eq:lower_bound_partial}, one can still explicitly calculate the values of $\xi$ and $\psi$ by naturally extending their domains in \eqref{eq:xi-definition} and \eqref{eq:psi-definition}.
The resulting formulas are the content of the next proposition.

\begin{proposition}[Closed-form $\xi$ and $\psi$ for $C^{\searrow}_\mu$]
    \label{prop:compact_formulas}
For every $\mu \in [0,2]$, the functions $C^{\searrow}_\mu$ defined in \eqref{eq:lower_bound_partial} satisfy
\[
    \psi(C^{\searrow}_\mu) = -2v_1^2 + 6v_1 - 5 + \frac{1}{v_1},
    \qquad
    \xi(C^{\searrow}_\mu) = -4v_1^2 + 20v_1 - 17 + \frac{2}{v_1} - \frac{1}{v_1^2} - 12\ln(v_1),
\]
where $v_1 = \tfrac{2}{2+\mu}$ and $0\le\mu\le 2$ as above.
As $\mu$ ranges from $0$ to $2$, the function $\psi(C^{\searrow}_\mu)$ decreases strictly and continuously from $0$ to $-0.5$, whereas $\xi(C^{\searrow}_\mu)$ increases strictly and continuously from $0$ to $12\ln(2)-8\approx0.318$.
\end{proposition}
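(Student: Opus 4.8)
The plan is to compute $\psi(C^{\searrow}_\mu)$ and $\xi(C^{\searrow}_\mu)$ directly from their integral representations in terms of $h_\mu$, and then to analyze the two resulting one-variable expressions. The key simplifying observation is that $v_0+v_1=\tfrac{\mu}{2+\mu}+\tfrac{2}{2+\mu}=1$, so that $v_0=1-v_1$ and $\tfrac{\mu}{2}=\tfrac{1-v_1}{v_1}$. This lets me eliminate both $\mu$ and $v_0$ in favour of $v_1$ throughout, which is exactly the variable in which the closed forms are stated.

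For $\psi$ I would start from \eqref{eq:psi-in-terms-of-h}. For fixed $v$ the indicator $\1_{\{t\le v\}}$ restricts to the region where $h_\mu(t,v)=h_1(v)$ by \eqref{eq:lower_bound_partial}, so $\int_0^1\1_{\{t\le v\}}h_\mu(t,v)\de t=v\,h_1(v)$ and hence $\psi(C^{\searrow}_\mu)=6\int_0^1 v\,h_1(v)\de v-2$. I would split this over the three ranges from \eqref{eq:lower_bound_h1h2}: the range $[0,v_0]$ contributes nothing since $h_1\equiv 0$ there, the range $(v_0,v_1]$ gives a polynomial integral in $v$, and on $(v_1,1]$ the integrand is $v(2-\tfrac1v)=2v-1$ with $\int_{v_1}^1(2v-1)\de v=v_1-v_1^2$. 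Collecting the pieces and inserting $v_0=1-v_1$ and $\tfrac{\mu}{2}=\tfrac{1-v_1}{v_1}$ yields $-2v_1^2+6v_1-5+\tfrac1{v_1}$ after routine algebra.

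For $\xi$ I would use \eqref{eq:xi-definition}, noting that for fixed $v$ one has $\int_0^1 h_\mu(t,v)^2\de t=v\,h_1(v)^2+(1-v)\,h_2(v)^2$, so $\xi(C^{\searrow}_\mu)=6\int_0^1\!\big(v\,h_1(v)^2+(1-v)h_2(v)^2\big)\de v-2$. Splitting again over the three ranges, the single logarithmic term arises from two sources: on $[0,v_0]$ the integrand is $(1-v)\big(\tfrac{v}{1-v}\big)^2=\tfrac{v^2}{1-v}$, and on $(v_1,1]$ it is $v(2-\tfrac1v)^2$; both produce $\ln$-contributions which, via $1-v_0=v_1$, combine into $-12\ln(v_1)$. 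The remaining pieces are polynomial in $v$ and, after the same substitutions, assemble into $-4v_1^2+20v_1-17+\tfrac2{v_1}-\tfrac1{v_1^2}-12\ln(v_1)$.

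For the monotonicity and endpoint claims, I would note that $v_1=\tfrac{2}{2+\mu}$ is a strictly decreasing bijection of $\mu\in[0,2]$ onto $v_1\in[\tfrac12,1]$, so it suffices to study the two closed forms as functions of $v_1$. Evaluating at $v_1=1$ and $v_1=\tfrac12$ gives $(\psi,\xi)=(0,0)$ and $(\psi,\xi)=(-\tfrac12,\,12\ln 2-8)$, settling the endpoints. For $\psi$, I would compute $\tfrac{\de\psi}{\de v_1}=-4v_1+6-v_1^{-2}$ and verify it is positive on $(\tfrac12,1)$; since $v_1$ decreases in $\mu$, this makes $\psi$ strictly decreasing in $\mu$. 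The genuinely delicate part, and the step I expect to be the main obstacle, is the monotonicity of $\xi$: here $\tfrac{\de\xi}{\de v_1}=-8v_1+20-2v_1^{-2}+2v_1^{-3}-12v_1^{-1}$ vanishes at \emph{both} endpoints $v_1=\tfrac12$ and $v_1=1$, so a mere endpoint sign check is inconclusive. The plan is to clear denominators, writing $v_1^3\,\tfrac{\de\xi}{\de v_1}=-8v_1^4+20v_1^3-12v_1^2-2v_1+2$, and to exploit that $v_1=1$ and $v_1=\tfrac12$ are roots to factor this quartic as $(v_1-1)(2v_1-1)(-4v_1^2+4v_1+2)$. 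The residual quadratic $-4v_1^2+4v_1+2=-2(2v_1^2-2v_1-1)$ has roots $\tfrac{1\pm\sqrt3}{2}$, neither in $(\tfrac12,1)$, so it stays positive there; a sign analysis of the three factors on $(\tfrac12,1)$ (namely $(v_1-1)<0$, $(2v_1-1)>0$, quadratic $>0$) then shows $\tfrac{\de\xi}{\de v_1}<0$ strictly on the open interval. As $v_1$ is strictly decreasing in $\mu$, this gives strict monotone increase of $\xi$ in $\mu$, and continuity of both expressions in $v_1$, hence in $\mu$, finishes the proof.
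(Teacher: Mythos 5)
Your proposal is correct and follows essentially the same route as the paper's proof: the same reduction to $\int_0^1 v h_1(v)^{\phantom{2}}\de v$ and $\int_0^1\bigl(v h_1(v)^2+(1-v)h_2(v)^2\bigr)\de v$ split over the three regimes, the same substitutions $v_0=1-v_1$, $\tfrac{\mu}{2}=\tfrac{1-v_1}{v_1}$, and the same derivative computations with the identical quartic factorization $-8v_1^4+20v_1^3-12v_1^2-2v_1+2=-2(v_1-1)(2v_1-1)(2v_1^2-2v_1-1)$ for the sign analysis on $(\tfrac12,1)$. Your added observation that the quartic vanishes at both endpoints $v_1=\tfrac12$ and $v_1=1$, which motivates the factorization, is a nice touch but does not change the argument.
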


\begin{proof}
Write $v_0=\tfrac{\mu}{2+\mu}$ and $v_1=\tfrac{2}{2+\mu}$.
These definitions imply the useful identities $v_0+v_1=1$, $1+\tfrac{\mu}{2}=\tfrac{1}{v_1}$, and $\tfrac{\mu}{2}=\tfrac{1-v_1}{v_1}$.
First, we compute $\psi(C^{\searrow}_\mu)=6\int_0^1 v h_1(v)\de v-2$.
From \eqref{eq:lower_bound_h1h2}, the integral splits into two non-zero parts:
\[
\int_0^1 v h_1(v)\,\de v
= \int_{v_0}^{v_1}v\Bigl(v-\tfrac{\mu}{2}(1-v)\Bigr)\,\de v
+ \int_{v_1}^{1}v\Bigl(2-\tfrac{1}{v}\Bigr)\,\de v.
\]
Evaluating these integrals yields
\(
\frac{1}{3}(1+\tfrac{\mu}{2})(v_1^3-v_0^3) - \frac{1}{2}\tfrac{\mu}{2}(v_1^2-v_0^2) + (1-v_1) - (\tfrac{1}{2}-\tfrac{v_1^2}{2}) + (v_1-v_1^2).
\)
Substituting $v_0=1-v_1$ and the identities for $\mu$ simplifies this expression, leading to the final result
\[
\psi(C^{\searrow}_\mu)=-2v_1^2+6v_1-5+\frac{1}{v_1}.
\]
Next, for $\xi(C^{\searrow}_\mu) = 6\int_0^1 \bigl(v h_1(v)^2+(1-v)h_2(v)^2\bigr)\de v-2$, we evaluate the integral over the three intervals from \eqref{eq:lower_bound_h1h2}:
\small
\begin{align*}
&\int_0^1 \bigl(v h_1(v)^2+(1-v)h_2(v)^2\bigr)\de v \\
=& \int_0^{v_0} (1-v)\left(\frac{v}{1-v}\right)^2 \de v
+ \int_{v_0}^{v_1}v(v-\tfrac{\mu}{2}(1-v))^2+(1-v)\left(v+\tfrac{\mu}{2}v\right)^2 \de v
+ \int_{v_1}^{1}v\left(2-\tfrac{1}{v}\right)^2+(1-v) \de v \\
=& \int_0^{v_0} \frac{v^2}{1-v} \de v
+ \int_{v_0}^{v_1} v^2\left(1-\left(\tfrac{\mu}{2}\right)^2\right) + v\left(\tfrac{\mu}{2}\right)^2 \de v
+ \int_{v_1}^{1} 3v - 3 + \frac{1}{v} \de v.
\end{align*}
\normalsize
Integrating these and combining the results leads to
\[
\xi(C^{\searrow}_\mu)
=-4v_1^2+20v_1-17+\frac{2}{v_1}-\frac{1}{v_1^2}-12\ln(v_1).
\]
For monotonicity, we consider the parameter range $\mu \in [0,2]$, which corresponds to $v_1 \in [1, 1/2]$.
Since $v_1$ is a strictly decreasing function of $\mu$, we analyze the signs of the derivatives with respect to $v_1$.
For $\psi$, we have
\[
\frac{\de\psi}{\de v_1}=\frac{-4v_1^3+6v_1^2-1}{v_1^2}.
\]
The numerator is positive for $v_1 \in (1/2, 1]$, so $\frac{\de\psi}{\de v_1} > 0$, which implies that $\psi(C^{\searrow}_\mu)$ is strictly decreasing in $\mu$.
For $\xi$, the derivative is
\[
\frac{\de\xi}{\de v_1}
= \frac{-8v_1^4 + 20v_1^3 - 12v_1^2 - 2v_1 + 2}{v_1^3}
=\frac{-2(v_1-1)(2v_1-1)(2v_1^2-2v_1-1)}{v_1^3}.
\]
On the interval $v_1\in(1/2,1)$, the factors $(v_1-1)$, $(2v_1-1)$, and $(2v_1^2-2v_1-1)$ have signs $(-)$, $(+)$, and $(-)$ respectively.
Their product is positive, making the entire expression for $\frac{\de\xi}{\de v_1}$ negative.
Thus, $\xi(C^{\searrow}_\mu)$ is strictly increasing in $\mu$.
The continuity of the functions is clear and the endpoint values are $\psi(0)=0$, $\psi(2)=-1/2$, $\xi(0)=0$, and $\xi(2)=12\ln(2)-8$.
\end{proof}

As indicated in the beginning of this section, we now turn towards showing that the family $(C^{\searrow}_\mu)_{\mu\in[0,2]}$ indeed provides a lower boundary for the exact $(\xi,\psi)$-region.
Key for this result is the following theorem.

\begin{theorem}[Lower Boundary Estimate]\label{thm:lower_boundary_mu_half}
For $\mu \le 2$, the functional $J:\CC \to \R$ given by
\[
    J(C) \;=\; \mu \psi(C) + \xi(C)
\]
is bounded below by $\mu \psi(C^{\searrow}_\mu) + \xi(C^{\searrow}_\mu)$.
\end{theorem}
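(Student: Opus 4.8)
The plan is to relax the copula constraints to pointwise-in-$v$ constraints and to solve the resulting family of one-dimensional problems by Jensen's inequality. Writing $h_v(t):=\partial_1 C(t,v)$ and combining \eqref{eq:xi-definition} with \eqref{eq:psi-in-terms-of-h}, I first rewrite
\[
J(C) = 6\int_0^1 F_v(h_v)\,\de v - 2\mu - 2,
\qquad
F_v(h_v):=\mu\int_0^v h_v(t)\,\de t + \int_0^1 h_v(t)^2\,\de t,
\]
so that the integrand depends on $h_v$ alone. By Lemma~\ref{charSIcop}, every copula satisfies $h_v(t)\in[0,1]$ and $\int_0^1 h_v(t)\,\de t=v$ for a.e.\ $v$, but I deliberately discard the monotonicity condition (iii). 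Minimizing $F_v$ for each $v$ subject only to these pointwise constraints enlarges the feasible set and hence produces a valid lower bound for $J$ over all of $\CC$.

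For fixed $v$, I apply Jensen's inequality separately on $[0,v]$ and on $(v,1]$: with $a:=\int_0^v h_v$ (so that $\int_v^1 h_v=v-a$), convexity of $t\mapsto t^2$ gives $\int_0^v h_v^2\ge a^2/v$ and $\int_v^1 h_v^2\ge (v-a)^2/(1-v)$, each with equality if and only if $h_v$ is a.e.\ constant on the respective interval. Thus $F_v(h_v)\ge g_v(a):=\mu a + a^2/v + (v-a)^2/(1-v)$, while the box constraint $h_v\in[0,1]$ translates into $a\in[\max(0,2v-1),\,v]$. Since $g_v$ is a strictly convex quadratic in $a$, its constrained minimizer is the unconstrained critical point $a^*=v^2-\tfrac{\mu}{2}v(1-v)$ when this lies in the feasible interval, and the nearer endpoint otherwise.

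The key computation is to pin down when each box constraint becomes active. The interior critical point corresponds to the constant values $h_1(v)=v-\tfrac{\mu}{2}(1-v)$ on $[0,v]$ and $h_2(v)=v+\tfrac{\mu}{2}v$ on $(v,1]$; the constraint $h_1\ge0$ fails exactly for $v<v_0=\mu/(2+\mu)$ (forcing $a=0$, i.e.\ $h_1=0$, $h_2=v/(1-v)$), whereas $h_2\le1$ fails exactly for $v>v_1=2/(2+\mu)$ (forcing $a=2v-1$, i.e.\ $h_1=2-1/v$, $h_2=1$). This reproduces precisely the three-regime description of $h_\mu$ in \eqref{eq:lower_bound_h1h2}, and the hypothesis $\mu\le2$ is exactly what guarantees $v_0\le\tfrac12\le v_1$, so that the clamped values stay in $[0,1]$ and the regimes do not overlap. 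Consequently $h_\mu(\cdot,v)$ is pointwise feasible (its integral over $t$ equals $v$), is constant on each of $[0,v]$ and $(v,1]$ so that Jensen is tight, and realizes the constrained minimizer of $g_v$; therefore $F_v\big(h_\mu(\cdot,v)\big)=\min_a g_v(a)=:g_v^*$ for a.e.\ $v$.

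Integrating the pointwise bound then gives, for every copula $C$,
\[
J(C) = 6\int_0^1 F_v(h_v)\,\de v - 2\mu - 2 \;\ge\; 6\int_0^1 g_v^*\,\de v - 2\mu - 2 \;=\; \mu\,\psi(C^{\searrow}_\mu)+\xi(C^{\searrow}_\mu),
\]
where the final equality uses $g_v^*=F_v\big(h_\mu(\cdot,v)\big)$ together with the extended definitions of $\xi$ and $\psi$ underlying Proposition~\ref{prop:compact_formulas}. I expect the main obstacle to be the case analysis of the constrained quadratic minimization, namely verifying that the activation thresholds of the two box constraints coincide with $v_0$ and $v_1$ and that $\mu\le2$ keeps all clamped values admissible, rather than any genuine analytic difficulty: the Jensen step and the relaxation argument are routine once the pointwise reformulation of $J$ is in place.
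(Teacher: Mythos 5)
Your proposal is correct and follows essentially the same route as the paper's proof: discard the monotonicity constraint, apply Jensen's inequality separately on $[0,v]$ and $(v,1]$ to reduce to two-level step functions, and minimize pointwise in $v$, recovering exactly the three regimes of $h_\mu$ in \eqref{eq:lower_bound_h1h2} with the same thresholds $v_0$ and $v_1$. The only difference is how the pointwise problem is solved: the paper keeps two variables $(h_1,h_2)$ and invokes the KKT framework from its appendix (Lemmas~\ref{lem:KKT} and~\ref{lem:BS-3.63}), whereas you eliminate $h_2$ via the linear constraint and clamp the critical point of a strictly convex one-dimensional quadratic to the interval $[\max(0,2v-1),v]$ --- a more elementary finish that avoids the Lagrange-multiplier machinery entirely.
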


\begin{proof}
By Lemma~\ref{charSIcop}, minimizing $J$ over $\CC$ is equivalent to minimizing 
\begin{align}\begin{aligned} \label{eq:jensen-h}
    \text{minimize} \quad & \int_0^1\!\Bigl(\mu \int_0^v h(t,v)\de t + \int_0^1 h(t,v)^2 \de t\Bigr)\de v,\\
    \text{subject to} \quad
    & \int_0^1 h(t,v)\, \de t = v \text{ for a.e.~} v\in(0,1), \\
    & v \mapsto h(t,v)\ \text{is non-decreasing on }[0,1]\ \text{for a.e.~}t, \\
    & 0\le h(t,v) \le 1 \text{ for a.e.~}(t,v)\in[0,1]^2.
\end{aligned}\end{align}
For fixed $v\in(0,1)$ define
\[
h_1(v):=\tfrac1v\int_0^v h(t,v)\de t, 
\qquad
h_2(v):=\tfrac1{1-v}\int_v^1 h(t,v)\de t .
\]
Then
\[
\int_0^1 \mu \int_0^v h(t,v)\de t\de v = \int_0^1 \mu v h_1(v)\de v,
\]
and by Jensen’s inequality with $\phi(x)=x^2$ as in \cite[Frm.~(2.8)]{pecaric1992convex},
\begin{align}
    \label{eq:jensen}
    \begin{aligned}
    \int_0^1 h(t,v)^2 \de t &= \int_0^v h(t,v)^2 \de t + \int_v^1 h(t,v)^2 \de t \\
    &\ge v \left( \frac{1}{v} \int_0^v h(t,v) \de t \right)^2 + (1-v) \left( \frac{1}{1-v} \int_v^1 h(t,v) \de t \right)^2 \\
    &= v h_1(v)^2 + (1-v) h_2(v)^2,
    \end{aligned}
\end{align}
with equality if and only if $h(\cdot,v)$ is a.e.\ constant on $[0,v]$ and on $(v,1]$.
Hence, replacing $h$ by
\begin{align}\label{eq:lower_bound_replacement}
h_{\mu}(t,v):= h_1(v)\,\1_{\{t\le v\}} + h_2(v)\,\1_{\{t>v\}}
\end{align}
does not increase the objective function from \eqref{eq:jensen-h}, so solving
\begin{align}\begin{aligned}\label{eq:jensen-2}
    \text{minimize} \quad & \mu v h_1 + v h_1^2 + (1-v)h_2^2, \\
    \text{subject to} \quad
    & v h_1 + (1-v) h_2 = v, \quad 0\le h_1, h_2 \le 1, 
\end{aligned}\end{align}
for each $v\in(0,1)$ and combining the solutions via \eqref{eq:lower_bound_replacement} gives a lower bound on the functional $J$.
Note that we omitted the monotonicity constraint from \eqref{eq:jensen-h} in \eqref{eq:jensen-2} to obtain a tractable minimization problem, hence the solution to \eqref{eq:jensen-2} only yields a lower bound for $J$ and not necessarily a copula attaining this bound.
Consider the Lagrangian
\begin{multline*}
\mathcal{L}_v((h_1,h_2),(\lambda,\alpha,\beta,\gamma,\delta))
= \mu v h_1 + v h_1^2 + (1-v) h_2^2 \\
- \lambda(v h_1+(1-v)h_2-v) - \alpha h_1 + \gamma(h_1-1) - \beta h_2 + \delta(h_2-1),
\end{multline*}
with multipliers $\lambda\in\R$ and $\alpha,\beta,\gamma,\delta\ge0$.
Analogously to the proof of Theorem~\ref{thm:exact_region_upper_bound} and in accordance with Lemma~\ref{lem:KKT}, the KKT conditions are given as follows:
\begin{enumerate}[(i)]
\item Stationarity: Differentiating $\mathcal{L}_v$ with respect to $h_1$ and $h_2$ gives
\[
\mu v + 2 v h_1 - \lambda v - \alpha + \gamma=0, 
\quad
2(1-v)h_2 - \lambda(1-v) - \beta + \delta=0.
\]
\item Primal feasibility: The constraints in \eqref{eq:jensen-2} holds.
\item Dual feasibility: $\alpha,\beta,\gamma,\delta\ge0$.
\item Complementary slackness: $\alpha h_1=\gamma(1-h_1)=\beta h_2=\delta(1-h_2)=0$.
\end{enumerate}
We distinguish three cases, depending on which bounds for $h_1$ and $h_2$ are active.
\begin{enumerate}[(i)]
\item \emph{Interior solution.}  
If $0<h_1<1$ and $0<h_2<1$, then $\alpha=\beta=\gamma=\delta=0$ and the 
stationarity conditions reduce to
\[
h_1=\tfrac{\lambda-\mu}{2}, \qquad h_2=\tfrac{\lambda}{2}.
\]
Using the linear constraint gives $\lambda=v(2+\mu)$, so
\[
h_1^{\mathrm{int}}(v)=v-\tfrac{\mu}{2}(1-v), 
\qquad 
h_2^{\mathrm{int}}(v)=v+\tfrac{\mu}{2}v.
\]
Feasibility requires $h_1^{\mathrm{int}}\ge 0$ and $h_2^{\mathrm{int}}\le 1$, i.e.
\[
v\ge v_0:=\tfrac{\mu}{2+\mu}, \qquad v\le v_1:=\tfrac{2}{2+\mu}.
\]
Since $\mu\le 2$, we have $v_0\le v_1$, so this case is consistent.

\item \emph{Lower bound active for $h_1$.}  
Suppose $h_1=0$ with $0<h_2<1$.
Then $\gamma=0$ and $\beta=\delta=0$, while
\[
\mu v-\lambda v-\alpha=0, \qquad 2(1-v)h_2-\lambda(1-v)=0.
\]
The constraint gives $h_2=\tfrac{v}{1-v}$ and $\lambda=\tfrac{2v}{1-v}$.
Complementarity requires $\alpha=v(\mu-\lambda)\ge 0$, i.e.\ $v\le v_0$.
Note $h_2\le 1$ if and only if $v\le \tfrac12$, and indeed $v_0\le \tfrac12$ when $\mu\le 2$.
Thus, this case occurs for $v\in[0,v_0]$.

\item \emph{Upper bound active for $h_2$.}  
Suppose $h_2=1$ with $0<h_1<1$.
Then $\beta=0$ and $\alpha=\gamma=0$, while
the constraint yields $h_1=2-\tfrac1v$.
Stationarity gives
\[
\lambda=\mu+2h_1=\mu+4-\tfrac{2}{v}, 
\qquad
\delta=(\lambda-2)(1-v)=(\mu+2-\tfrac{2}{v})(1-v)\ge 0,
\]
equivalent to $v\ge v_1$.
One checks $0\le h_1\le 1$ holds for $v\ge \tfrac12$, 
and indeed $v_1\in[\tfrac12,1]$ for $\mu\le 2$.
Thus, this case occurs for $v\in[v_1,1]$.
\end{enumerate}

Hence, for each possible value of $v\in(0,1)$, we have successfully identified a KKT point.
Collecting cases, the KKT point is given by
\begin{align}\label{eq:jensen_kkt_solution}
(h_1,h_2)(v)=
\begin{cases}
\bigl(0,\ \tfrac{v}{1-v}\bigr), & v\in[0,v_0],\\[6pt]
\bigl(v-\tfrac{\mu}{2}(1-v),\ v+\tfrac{\mu}{2}v\bigr), & v\in(v_0,v_1],\\[6pt]
\bigl(2-\tfrac1v,\ 1\bigr), & v\in(v_1,1],
\end{cases}
\end{align}
with $v_0=\tfrac{\mu}{2+\mu}$, $v_1=\tfrac{2}{2+\mu}$.
The Hessian of $\mathcal{L}_v$ is $\mathrm{diag}(2v,2(1-v))$, which satisfies quadratic growth condition from Lemma~\ref{lem:BS-3.63} since $0<v<1$.
Since \eqref{eq:jensen-2} is another \emph{convex} optimization problem, the KKT point from \eqref{eq:jensen_kkt_solution} is the unique \emph{global} minimizer, cf.~\cite[Lem.~4.6]{ansari2025exact}.
Thus, $h_\mu$ from \eqref{eq:lower_bound_replacement} indeed gives a lower bound to the true minimizer of \eqref{eq:jensen-h}.
Due to \eqref{eq:jensen_kkt_solution} it holds that $h_\mu=\partial_1 C^{\searrow}_\mu$, so it indeed follows
\[
J(C)=\mu\psi(C)+\xi(C)\;\ge\;\mu\psi(C^{\searrow}_\mu)+\xi(C^{\searrow}_\mu),
\]
finishing the proof.
\end{proof}

Together with the upper boundary from Theorem~\ref{thm:exact_region_upper_bound}, the above proposition allows bounding the full attainable $(\xi,\psi)$-region
\begin{align}\label{eq:attainable_region}
\mathcal{R}:=\{(x,y)\in[0, 1] \times [-\tfrac12, 1] \mid (x,y) = (\xi(C), \psi(C)) \text{ for some } C \in \CC\},
\end{align}
which is the content of the following theorem.

\begin{theorem}
    \label{thm:final_explicit_region}
The full attainable $(\xi,\psi)$-region $\mathcal{R}$ is convex, closed and satisfies
\begin{align}\label{eq:region_bound}
\mathcal{R} \subset \left\{ (x,y) \in [0, 1] \times [-\tfrac12, 1] \mid \xi(C^{\searrow}_{\mu(y)}) \le x,~y \le \sqrt{x} \right\},
\end{align}
where the function $\xi(C^{\searrow}_{\mu(y)})$ is explicitly given in Proposition~\ref{prop:compact_formulas}, and the parameter $\mu(y)$ is the unique real solution to the cubic equation $\mu^3 - (4+2y)\mu^2 - (4+8y)\mu - 8y = 0$.
\end{theorem}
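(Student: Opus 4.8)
The plan is to handle the three assertions—the explicit containment, convexity, and closedness—largely separately, since the containment follows quickly from the two boundary theorems while convexity and closedness need a little more topology. For the containment, the bound $y\le\sqrt x$ is just Theorem~\ref{thm:exact_region_upper_bound} rewritten: every $C\in\CC$ satisfies $\psi(C)\le\sqrt{\xi(C)}$. For the lower bound I would, given $y$, choose $\mu(y)$ so that $\psi(C^{\searrow}_{\mu(y)})=y$; substituting $v_1=\tfrac{2}{2+\mu}$ into the closed form $\psi(C^{\searrow}_\mu)=-2v_1^2+6v_1-5+\tfrac1{v_1}$ from Proposition~\ref{prop:compact_formulas} and clearing denominators yields exactly the stated cubic, so $\mu(y)$ is its root lying in $[0,2]$ (unique there by the strict monotonicity of $\mu\mapsto\psi(C^{\searrow}_\mu)$ established in Proposition~\ref{prop:compact_formulas}). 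For $y\in[-\tfrac12,0]$ this $\mu(y)$ lies in $[0,2]$, so Theorem~\ref{thm:lower_boundary_mu_half} applies and gives, for any $C$ with $\psi(C)=y$,
\[
\mu(y)\,y+\xi(C)=\mu(y)\psi(C)+\xi(C)\;\ge\;\mu(y)\psi(C^{\searrow}_{\mu(y)})+\xi(C^{\searrow}_{\mu(y)})=\mu(y)\,y+\xi(C^{\searrow}_{\mu(y)}),
\]
i.e.\ $x=\xi(C)\ge\xi(C^{\searrow}_{\mu(y)})$. For $y>0$ the construction cannot match $\psi$ within $[0,2]$; the relevant root of the cubic has $v_1<\tfrac12$, where the extended formula gives $\xi(C^{\searrow}_{\mu(y)})\le0\le x$, so the bound is automatic and $y\le\sqrt x$ is the binding constraint. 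I would record this case explicitly.

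For convexity I would reduce the region to a single boundary function. Writing $\xi_{\min}(y):=\min\{\xi(C):\psi(C)=y\}$, I claim $\mathcal R=\{(x,y):y\in[-\tfrac12,1],\ \xi_{\min}(y)\le x\le 1\}$, which rests on three ingredients. \emph{(a) Interval property}: for copulas $C_0,C_1$ with $\psi(C_0)=\psi(C_1)=y$, the path $C_\lambda=(1-\lambda)C_0+\lambda C_1$ keeps $\psi(C_\lambda)=y$ fixed by linearity of $\psi$, while $\xi(C_\lambda)$ varies continuously as the explicit quadratic in \eqref{eq:xi_convex_continuity}, so the attainable $\xi$ at height $y$ is an interval by the intermediate value theorem. \emph{(b) The left boundary is convex}: the set $\{(x,y):\exists\,C,\ \psi(C)=y,\ \xi(C)\le x\}$ is convex because $\psi$ is linear and $\xi=6\|\partial_1C\|_{L^2}^2-2$ is convex in $\partial_1 C$; this set is the epigraph-in-$x$ of $\xi_{\min}$, so $\xi_{\min}$ is convex. \emph{(c) The right boundary equals $1$}: I would exhibit a continuous family of complete-dependence copulas whose footrule sweeps all of $[-\tfrac12,1]$, for instance the copulas $C_{f_t}$ induced by the measure-preserving involutions $f_t$ that reflect $[1-t,1]$ and fix $[0,1-t]$. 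Each has $\xi(C_{f_t})=1$, while $t\mapsto\psi(C_{f_t})$ is continuous with $\psi(C_{f_0})=\psi(\MC)=1$ and $\psi(C_{f_1})=\psi(\WC)=-\tfrac12$, so the intermediate value theorem gives $\max\{\xi(C):\psi(C)=y\}=1$ for every $y\in[-\tfrac12,1]$ and shows that $\psi$ attains the whole interval $[-\tfrac12,1]$. Combining (a)–(c) gives the displayed description of $\mathcal R$, and since $\xi_{\min}$ is convex with a vertical right wall $x=1$ over the interval $y\in[-\tfrac12,1]$, the set $\mathcal R$ is convex.

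Closedness I would obtain from compactness: $\CC$ is compact under uniform convergence (copulas are $1$-Lipschitz and uniformly bounded), $\psi$ is continuous, and $\xi$ is lower semicontinuous, since uniform convergence $C_n\to C$ forces $\partial_1C_n\rightharpoonup\partial_1C$ weakly in $L^2$ and $\|\cdot\|_{L^2}^2$ is weakly lower semicontinuous. Choosing $C_n$ that attain $\xi_{\min}(y_n)$ and passing to a convergent subsequence then shows that $\xi_{\min}$ is itself lower semicontinuous on $[-\tfrac12,1]$ and attained, whence $\mathcal R=\{\xi_{\min}(y)\le x\le1\}$ is closed.

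I expect the main obstacle to be step (c) together with the low regularity of $\xi$: convexity of $\mathcal R$ genuinely relies on the region reaching $\xi=1$ at \emph{every} footrule level, a fact invisible from the two boundary theorems that must be supplied by an explicit complete-dependence construction, while the passage to limits for both the attainment of $\xi_{\min}$ and for closedness must be carried out through weak $L^2$ lower semicontinuity rather than naive continuity of $\xi$. Verifying that the root of the cubic relevant to the lower bound is precisely the one in $[0,2]$ (the cubic can have up to three real roots) is a routine but necessary bookkeeping point.
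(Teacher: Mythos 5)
Your proposal is correct, and it matches the paper on the containment but takes a genuinely different route for convexity and closedness. For \eqref{eq:region_bound} you argue exactly as the paper does: the upper bound is Theorem~\ref{thm:exact_region_upper_bound}, and the lower bound comes from matching $y=\psi(C^{\searrow}_{\mu(y)})$ via Proposition~\ref{prop:compact_formulas}, invoking Theorem~\ref{thm:lower_boundary_mu_half}, and clearing denominators to get the stated cubic; your two bookkeeping remarks are in fact improvements, since the paper only treats $\psi(C)\le 0$ and only establishes uniqueness of $\mu(y)$ \emph{within} $[0,2]$ (for $y\in(-\tfrac12,0)$ the cubic really does have another positive root with $\mu>2$, so ``unique real solution'' must be read as ``unique solution in $[0,2]$''). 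One small caveat: the extended formula for $\xi(C^{\searrow}_\mu)$ is \emph{not} $\le 0$ on all of $v_1<\tfrac12$ (it is positive near $v_1=0.4$); it is $\le 0$ only for the roots with $v_1\le 1-\tfrac{\sqrt{2}}{2}$ that actually arise when $y\ge 0$, which is what your argument needs. Where you diverge is the structural part. The paper parametrizes the lower boundary vertically, $\phi(x)=\inf\{\psi(C):\xi(C)=x\}$, reduces to stochastically decreasing copulas using cited rearrangement results, gets attainment from compactness of $\CC_{\mathrm{SD}}$ together with the cited continuity of $\xi$ on that class, and proves convexity of $\phi$ by mixing optimizers plus an (asserted) monotonicity of $\phi$. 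You parametrize horizontally, $\xi_{\min}(y)=\min\{\xi(C):\psi(C)=y\}$, prove its convexity through convexity of the set $\{(x,y):\exists C,\ \psi(C)=y,\ \xi(C)\le x\}$ — which neatly sidesteps any monotonicity claim — and obtain attainment, lower semicontinuity of $\xi_{\min}$, and closedness from compactness of all of $\CC$ combined with weak $L^2$ lower semicontinuity of $\xi$ (correct: uniform convergence gives $\partial_1 C_n\rightharpoonup\partial_1 C$ by integrating by parts against smooth test functions, and the squared norm is weakly lower semicontinuous). Both proofs need the right wall $x=1$ at every level $y$; the paper cites shuffle-of-min results, while you build an explicit reflection family $C_{f_t}$, which works. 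The net trade-off: your argument is more self-contained (no SD-rearrangement machinery, no external continuity theorem for $\xi$), at the cost of having to write out the lower-semicontinuity and complete-dependence constructions; the paper's route is shorter on the page but leans on three cited results and leaves the monotonicity of $\phi$ and the $y>0$ case implicit.
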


Note that $\mu(y)$ can be explicitly computed using Cardano's method, see, e.g., \cite[Chapter 8.8]{lal2017algebra2}.
However, the resulting expression is rather complex and we prefer to omit it.

\begin{proof}
We start with some general observations.
Elementarily, if $C_1,C_2\in\CC$ with $C_1\le C_2$ pointwise, then $\psi(C_1)\le\psi(C_2)$.
Hence, by \cite[Prop.~2.4 \& Cor.~2.5]{ansari2025exact}, for any copula $C$ there exists a stochastically decreasing (SD) copula $C'$ with
$\psi(C')\le \psi(C)$ and $\xi(C')=\xi(C)$.
Consequently, for each fixed $x\in[0,1]$,
\[
\inf\{\psi(C): C\in\CC,\,\xi(C)=x\}
\;=\;
\inf\{\psi(C): C\in\CC_{\mathrm{SD}},\,\xi(C)=x\}\;=:\;\phi(x).
\]
The space of all copulas $\CC$ is compact with respect to the topology of uniform convergence, see, e.g., \cite{trutschnig2017strong}.
The class $\CC_{\mathrm{SD}}$ is a closed subset of $\CC$, and since a closed subset of a compact space is itself compact, it follows that $\CC_{\mathrm{SD}}$ is compact.
Moreover, $\psi$ is continuous on $\CC$, and $\xi$ is continuous on $\CC_{\mathrm{SD}}$ by \cite[Cor.~3.6]{ansari2025continuity}.
Therefore, the map $C\mapsto(\xi(C),\psi(C))$ is continuous on the compact $\CC_{\mathrm{SD}}$, and for every $x$ the lower boundary value
\(
\phi(x)
\)
is attained by some $C_x\in\CC_{\mathrm{SD}}$.
Fix $y\in[-\tfrac12,1]$ and let
\begin{align}\label{eq:horizontal_slice}
\mathcal R_y:=\{x\in[0,1]: (x,y)\in\mathcal R\}=\{\xi(C): C\in\CC,\ \psi(C)=y\}.
\end{align}
If $C_0,C_1\in\CC$ satisfy $\psi(C_0)=\psi(C_1)=y$, then for $C_\lambda:=(1-\lambda)C_0+\lambda C_1$ one has $\psi(C_\lambda)=y$ for all $\lambda\in[0,1]$ by linearity of $\psi$.
Also, $\xi(C_\lambda)$ is continuous in $\lambda$ by \eqref{eq:xi_convex_continuity}, so $\mathcal R_y$ is a (possibly degenerate) closed interval.
Furthermore, this closed interval always contains the point $1$, since for example shuffle-of-min copulas generally satisfy $\xi(C)=1$ and can yield arbitrary values for $\psi$, see, e.g., \cite{durante2012approximation,rockel2025measures}.
Hence, $\mathcal R$ is only limited by the upper and lower boundaries on $\psi$ for each fixed value of $\xi$, and both boundaries are fully attained.

Regarding convexity of $\mathcal R$, the upper boundary $x\mapsto \sqrt{x}$ derived in Theorem~\ref{thm:exact_region_upper_bound} is concave on $[0,1]$.
For the lower boundary, take $x_0,x_1\in[0,1]$ and $C_0,C_1\in\CC$ with $\psi(C_i)=\phi(x_i)$ and $\xi(C_i)=x_i$.
For $\lambda\in[0,1]$ consider $C_\lambda:=(1-\lambda)C_0+\lambda C_1$.
Then it is
\[
    x_\lambda:=\xi(C_\lambda)\le (1-\lambda)x_0+\lambda x_1,
    \qquad
    y_\lambda:=\psi(C_\lambda)=(1-\lambda)\phi(x_0)+\lambda \phi(x_1),
\]
where the inequality follows from convexity of $\xi$ on segments, see \cite[Lem.~2.6]{ansari2025exact}.
Since $\phi$ is non-increasing, the point $(x_\lambda,y_\lambda)$ lies weakly below to the straight line connecting $(x_0,\phi(x_0))$ and $(x_1,\phi(x_1))$ thereby making the lower boundary convex.
A region bounded below by a convex function and above by a concave function is convex.

Regarding closure of $\mathcal R$, note that a finite convex function on an open interval is continuous; hence $\phi$ is continuous on $(0,1)$.
By Proposition~\ref{prop:compact_formulas} and convexity of $\phi$, it must be $\lim_{x\to0}\phi(x)=0=\phi(0)$.
Furthermore, it is easy to construct a copula with $\psi=-0.5$ and $\xi<1$, see for example the checkerboard copula $C_\#$ in Theorem~\ref{thm:checkerboard}.
Therefore, $\phi$ is continuous at $1$ as well.
Recall from the above that $\mathcal{R}$ is only limited by the upper and lower boundaries on $\psi$ for each fixed value of $\xi\in[0,1]$.
The upper boundary is continuous and attained by Theorem~\ref{thm:exact_region_upper_bound}, and the lower boundary is continuous and attained by the considerations above.
Hence, $\mathcal R$ is closed.

Lastly, regarding \eqref{eq:region_bound}, recall again that the upper boundary $y \le \sqrt{x}$ is given by Theorem~\ref{thm:exact_region_upper_bound}.
For the lower boundary, let $C \in \CC$ be any copula with $\psi(C) \le 0$.
By Proposition~\ref{prop:compact_formulas}, there exists a $\mu \in [0,2]$ such that 
\(
  \psi(C) = \psi(C^{\searrow}_\mu).
\)
Then Proposition~\ref{thm:lower_boundary_mu_half} implies
\(
  \xi(C) \;\ge\; \xi(C^{\searrow}_\mu).
\)
Thus, the point $(\xi(C), \psi(C))$ lies weakly to the right of the curve 
\(
\bigl(\xi(C^{\searrow}_\mu), \psi(C^{\searrow}_\mu)\bigr)_{\mu\in[0,2]}.
\)
By Proposition~\ref{prop:compact_formulas}, there exists a unique $\mu(y)\in[0,2]$ such that
\begin{align}\label{eq:mu_y_definition}
   y 
   = \psi(C^{\searrow}_{\mu(y)})
   = -2v_1^2+6v_1-5+\frac{1}{v_1}
\end{align}
with \(v_1 = \frac{2}{2+\mu(y)}\).
The lower–boundary condition $\xi(C) \ge \xi(C^{\searrow}_\mu)$ then becomes
\(
   x \;\ge\; \xi\bigl(C^{\searrow}_{\mu(y)}\bigr).
\)
To express $\mu$ in terms of $y$, a rearrangement of \eqref{eq:mu_y_definition} yields the cubic
\[
\mu^3-(4+2y)\mu^2-(4+8y)\mu-8y=0.
\]
As shown in Proposition~\ref{prop:compact_formulas}, \(\mu\mapsto \psi(C^{\searrow}_\mu)\) is strictly decreasing on \([0,2]\) and ranges from \(0\) to \(-\tfrac12\), so for each \(y\in[-\tfrac12,0]\) there is a unique solution \(\mu(y)\in[0,2]\) of this cubic, and it gives precisely the parameter $\mu(y)$ corresponding to the boundary point with $\psi(C^{\searrow}_{\mu(y)})=y$.
\end{proof}

The lower bound obtained from Jensen's inequality touches the point $(12\ln(2)-8, -0.5)$, namely for $\mu=2$.
For true copulas however, such a small value of Chatterjee's rank correlation cannot be achieved when Spearman's footrule takes its most negative value.
The next theorem shows that for $\psi=-0.5$ the smallest possible value of $\xi$ is actually $0.5$, which is uniquely achieved by a checkerboard copula.

\begin{theorem}[Minimal $\xi$ for $\psi=-0.5$]\label{thm:checkerboard}
Let $C_{\#}$ be the $2\times2$ checkerboard copula with density
\[
c_{\#}(u,v)
= 2\,\Big(\1_{[0,\frac12)}(u)\,\1_{[\frac12,1]}(v)
      +\1_{[\frac12,1]}(u)\,\1_{[0,\frac12)}(v)\Big).
\]
Then $\psi(C_{\#})=-\tfrac12$ and $\xi(C_{\#})=\tfrac12$.
Moreover, for any bivariate copula $C$ with $\psi(C)=-\tfrac12$,
\begin{align}\label{eq:checkerboard_minimality}
\xi(C)\;\ge\;\xi(C_{\#}) \;=\; \tfrac12,
\end{align}
with equality if and only if $C=C_{\#}$ almost everywhere.
\end{theorem}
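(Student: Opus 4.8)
The plan is to separate the statement into the explicit evaluation of $\xi$ and $\psi$ at $C_\#$ and the minimality claim \eqref{eq:checkerboard_minimality}, the latter being the substantive part. For the evaluation I would first read off the partial derivative from the density,
\[
\partial_1 C_\#(t,v)=\max(2v-1,0)\,\1_{\{t<1/2\}}+\min(2v,1)\,\1_{\{t>1/2\}},
\]
and insert it into \eqref{eq:xi-definition} and \eqref{eq:psi-definition}. This is a routine piecewise integration over the four subsquares cut out by $t=\tfrac12$ and $v=\tfrac12$, and it yields $\psi(C_\#)=-\tfrac12$ and $\xi(C_\#)=\tfrac12$. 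The ``if'' direction of the equality characterization then follows from this computation, so only ``$\ge$'' and the ``only if'' direction remain.

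The first step towards minimality is to reinterpret the constraint $\psi(C)=-\tfrac12$ as a pointwise constraint on the diagonal. Since every copula dominates the lower Fréchet bound $W(u,v)=\max\{u+v-1,0\}$, we have $C(v,v)\ge \max(2v-1,0)$ for all $v$, and integrating gives $\psi(C)=6\int_0^1 C(v,v)\,\de v-2\ge-\tfrac12$. Because the integrand $C(v,v)-\max(2v-1,0)$ is nonnegative, $\psi(C)=-\tfrac12$ holds if and only if $C(v,v)=\max(2v-1,0)$ for a.e.\ (hence, by continuity of $C$, every) $v\in[0,1]$.

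The crucial step is to combine this diagonal constraint with the copula structure of Lemma~\ref{charSIcop} to pin down $h:=\partial_1 C$ on two full quadrants. Writing $h_v(t)=h(t,v)$, for $v\le\tfrac12$ the diagonal constraint gives $\int_0^v h_v=C(v,v)=0$, forcing $h_v=0$ a.e.\ on $[0,v]$; a Fubini argument combined with the monotonicity of $v\mapsto h_v(t)$ (condition (iii)) then propagates this to $h\equiv0$ a.e.\ on the entire lower-left quadrant $[0,\tfrac12]^2$. Symmetrically, for $v>\tfrac12$ the relations $\int_0^v h_v=2v-1$ and $\int_0^1 h_v=v$ give $\int_v^1 h_v=1-v$ on an interval of length $1-v$, so $h_v=1$ a.e.\ on $[v,1]$, and monotonicity propagates this to $h\equiv1$ a.e.\ on the upper-right quadrant $[\tfrac12,1]^2$. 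I expect this monotonicity propagation to be the main obstacle: it is genuinely indispensable, since dropping condition (iii) and minimizing $\int_0^1 h_v^2$ slicewise would only yield the strictly smaller, non-attained bound $12\ln2-8$, realized by the non-copula $C^{\searrow}_2$.

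Finally, on the two remaining quadrants the slicewise mass is fixed by $\int_0^1 h_v=v$, namely $\int_{1/2}^1 h_v=v$ for $v<\tfrac12$ and $\int_0^{1/2}h_v=v-\tfrac12$ for $v>\tfrac12$. Minimizing $\int_0^1 h_v(t)^2\,\de t$ slicewise by Jensen's inequality gives the constant minimizers $h_v\equiv2v$ and $h_v\equiv2v-1$ respectively; both are non-decreasing in $v$, hence automatically satisfy condition (iii), are feasible, and coincide with $\partial_1 C_\#$. Integrating the slicewise minima yields $\int_0^1\!\int_0^1 h^2\ge\tfrac{5}{12}$, i.e.\ $\xi(C)\ge\tfrac12$. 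Equality in Jensen forces each $h_v$ to be constant in $t$ on its subinterval, which together with the two forced quadrant values determines $h=\partial_1 C_\#$ a.e.; since $C(u,v)=\int_0^u h(t,v)\,\de t$, this gives $C=C_\#$ a.e.\ and establishes uniqueness.
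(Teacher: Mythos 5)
Your proof is correct, and I verified the two places where it could have broken: the monotonicity propagation (for the lower-left quadrant, taking $v=\tfrac12$ gives $\int_0^{1/2}h_{1/2}(t)\de t=C(\tfrac12,\tfrac12)=0$, hence $h_{1/2}=0$ a.e.\ on $[0,\tfrac12]$, and condition (iii) of Lemma~\ref{charSIcop} pushes this down to all $v\le\tfrac12$; for the upper-right quadrant, $\int_v^1 h_v = 1-v$ with $h_v\le 1$ forces $h_v=1$ a.e.\ on $[v,1]$, and a sequence $v_n\downarrow\tfrac12$ plus monotonicity fills the quadrant) and the arithmetic (the four quadrants contribute $0$, $\tfrac1{12}$, $\tfrac1{12}$ and $\tfrac14$, summing to $\tfrac5{12}$, i.e.\ $\xi(C)\ge 6\cdot\tfrac5{12}-2=\tfrac12$). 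However, you take a genuinely different route from the paper, whose proof is a short chain of citations: by \cite[Thm.~3.2]{fuchs2019lower}, $\psi(C)=-\tfrac12$ forces the mass of $C$ onto the two off-diagonal squares of the $2\times2$ grid, so the checkerboard coarsening of any such $C$ is exactly $C_{\#}$; the coarsening inequality \cite[Thm.~4.1]{rockel2025measures} then gives $\xi(C_{\#})\le\xi(C)$, and the equality case comes from \cite{ansarifuchs2022asimpleextension}. Your argument re-derives from first principles exactly the pieces of those external results that are needed: the Fréchet bound plus continuity replaces the support characterization (and in fact you only need the diagonal values of $C$, not the full support statement), and your slice-wise Jensen step on the half-intervals is precisely a hand-made instance of the checkerboard coarsening inequality for this particular grid, with Jensen's equality condition delivering uniqueness. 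What each approach buys: yours is self-contained (only Lemma~\ref{charSIcop}, the Fréchet bound, and Jensen), and it makes visible why the monotonicity condition (iii) is indispensable — dropping it would only yield the unattained bound $12\ln 2-8$ realized by the non-copula $C^{\searrow}_2$ of Proposition~\ref{prop:compact_formulas}, an observation you correctly flag; the paper's proof is much shorter and leverages general theorems that settle the uniqueness claim without any additional computation.
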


\begin{proof}
Let $C$ be any bivariate copula with $\psi(C) = -1/2$.
By \cite[Thm.~3.2]{fuchs2019lower}, this is equivalent to the measure of $C$ being supported on the off-diagonal squares of a $2 \times 2$ grid.
Consequently, the checkerboard matrix $\Delta$ associated with any such $C$ is uniquely given by
\[
    \Delta = \begin{pmatrix} 0 & 1/2 \\ 1/2 & 0 \end{pmatrix}.
\]
The checkerboard copula constructed from this matrix, $C_{\Pi}^{\Delta}$, is precisely the copula $C_{\#}$.
By \cite[Thm.~4.1]{rockel2025measures}, we have that $\xi(C_{\Pi}^{\Delta}) \le \xi(C)$.
Thus, $\xi(C_{\#}) \le \xi(C)$ for all copulas $C$ in the considered class, which shows that $C_{\#}$ is the minimizer.
Using \cite[Thm.\;2.2 \& Cor.\;2.3]{ansarifuchs2022asimpleextension}, it is easy to see that equality in \cite[Formula (18) \& Thm.~4.1]{rockel2025measures} holds if and only if $C = C_{\Pi}^{\Delta}$ almost everywhere, which implies the uniqueness claim.
Lastly, a direct calculation shows that $\xi(C_{\#}) = 1/2$.
\end{proof}

\subsection{A novel two-parameter copula family}
\label{sec:two_param_copula}

Whilst we do not precisely obtain the lower boundary of the full attainable $(\xi,\psi)$-region, there is some numerical insight for the shapes of the minimizers that can be obtained from discretized convex optimization using computer software like Python's \texttt{cvxpy} package.
Such a discretized optimization approach allows to numerically recover for example the upper boundary of the full attainable $(\xi,\psi)$-region from Section~\ref{sec:attainable_region}, or the known extremizers for the exact regions studied in \cite{ansari2025exact} and \cite{bukovvsek2024exact}.
In particular, it indicates how the minimizers in the $(\xi, \psi)$ space could look like.
Figure \ref{fig:cvxpy} gives several density plots for different values of $\mu$, discretely minimizing $J(C) := \mu\psi(C) + \xi(C)$ subject to $C \in \mathcal{C}$, or, in other words, discretely solving the optimization problem
\begin{align}
    \label{eq:h_problem}
    \begin{aligned}
    \text{minimize} \quad 
        & 6 \int_0^1 \int_0^1 \Bigl( \mu \, \1_{\{t \le v\}} \, h(t,v) + h(t,v)^2 \Bigr) \, \de t \, \de v \\[0.5em]
    \text{subject to} \quad 
        & 0 \le h(t,v) \le 1 \quad \text{for a.e.~} (t,v) \in (0,1)^2, \\[0.3em]
        & \partial_v h(t,v) \ge 0 \quad \text{in the sense of distributions}, \\[0.3em]
        & \int_0^1 h(t,v) \, \de t = v 
          \quad \text{for a.e.~} v \in (0,1).
    \end{aligned}
\end{align}
The shapes of the copulas in Figure \ref{fig:cvxpy} appear somewhat intuitive, because they tackle two objectives simultaneously to minimize $\mu\psi + \xi$.
Making $\psi$ small is achieved by moving the mass away from the main diagonal to keep $C(u,u)$ as small as possible for $u\in[0,1]$, whilst distributing the mass as evenly as possible between the different conditional distributions $h_v(t)=\partial_1 C(t,v)$ for each $v\in[0,1]$ makes $\xi$ small.

\begin{figure}[ht]
    \centering
    \includegraphics[width=0.32\textwidth]{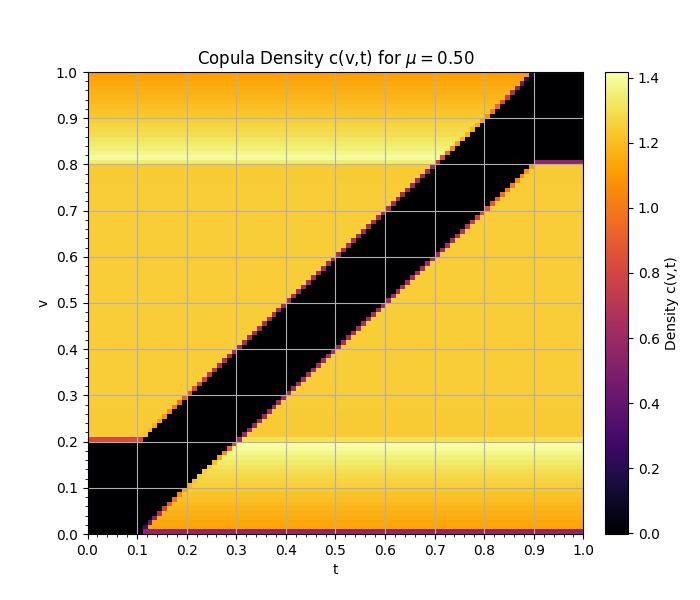}
    \includegraphics[width=0.32\textwidth]{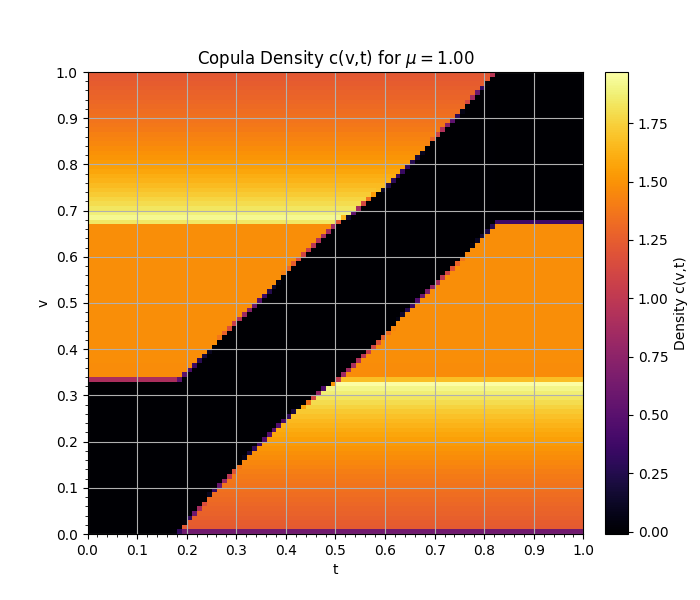}
    \includegraphics[width=0.32\textwidth]{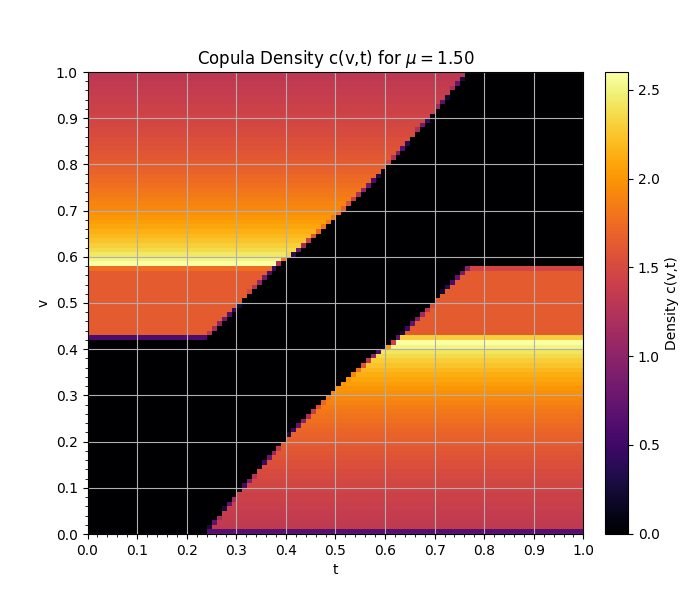} \vspace{-0.5em}\\
    \includegraphics[width=0.32\textwidth]{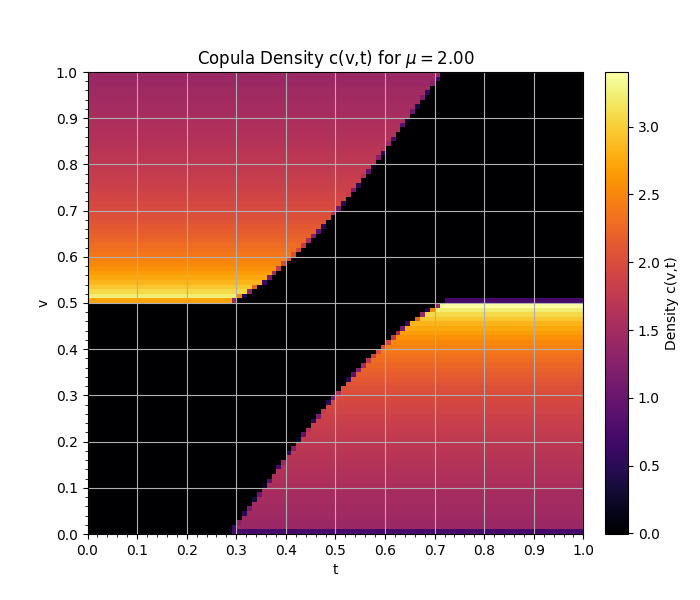}
    \includegraphics[width=0.32\textwidth]{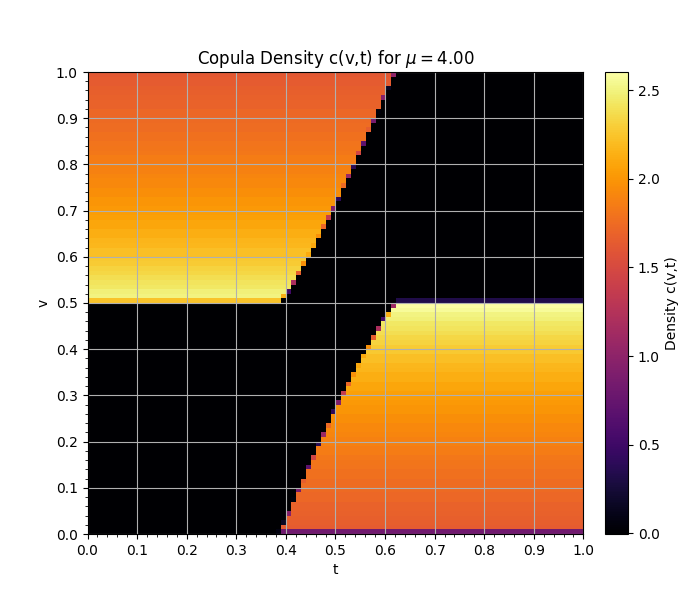}
    \includegraphics[width=0.32\textwidth]{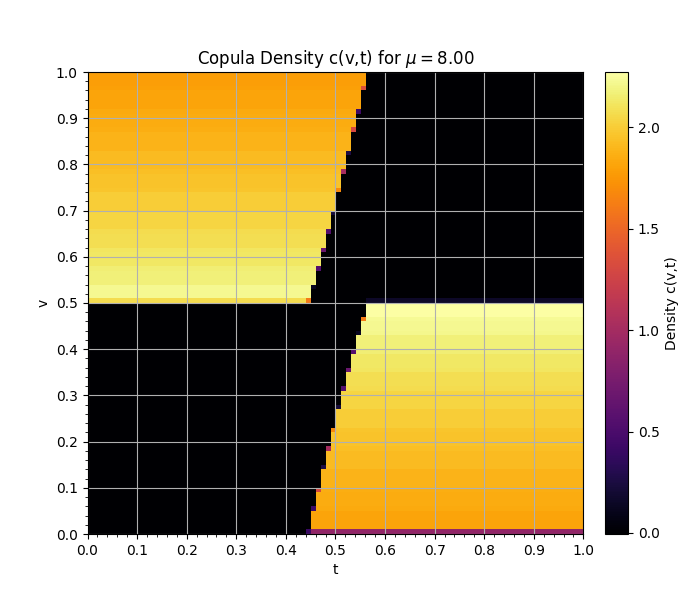} \\
    \caption{Density plots for discrete minimizers of the convex optimization problem \eqref{eq:h_problem} for different values of $\mu$.}
    \label{fig:cvxpy}
\end{figure}

We now define a two-parameter copula family that has a similar structure as the discretely obtained densities from Figure \ref{fig:cvxpy}.
Let $\alpha, \beta \in [0, 0.5)$ be given parameters that determine the shape of a diagonal strip of zero density, denoted by $H_{\alpha,\beta}$.
This strip is defined by a lower boundary function $\psi(s)$ and has a constant vertical width $\beta$.
The lower boundary $\psi : [0,1] \to [0,1-\beta]$ is given piecewise to create rectangular corners of size $\alpha \times \beta$:
\[
    \psi(s) \coloneq
    \begin{cases}
        0, & \text{if } 0 \le s \le \alpha, \\[6pt]
        \dfrac{1-\beta}{1-2\alpha}(s - \alpha), & \text{if } \alpha < s < 1-\alpha, \\[10pt]
        1-\beta, & \text{if } 1-\alpha \le s \le 1.
    \end{cases}
\]
The strip of zero density is then formally defined as
\begin{align}\label{eq:zero_density_region}
    H_{\alpha,\beta}
    \coloneq \left\{ (s,t) \in [0,1]^2 \mid \psi(s) \le t \le \psi(s) + \beta \right\}.
\end{align}
This region $H_{\alpha,\beta}$, bordered by the solid white lines in Figure~\ref{fig:two_param_copula_family}, has Lebesgue measure
\[
    \lambda^2(H_{\alpha,\beta})
    = \int_0^1 \beta \de s
    = \beta,
\]
where $\lambda^2$ denotes the Lebesgue measure on $\R^2$.
To ensure uniform marginal distributions, we introduce a transformation from the copula’s second argument $v$ to an intermediate variable $t$ 
Let
\[
    L(t) \coloneq \int_0^1 \1_{\{\psi(s) \le t \le \psi(s) + \beta\}} \de s
\] 
and define a probability density function $f_T$ on $[0,1]$ by
\begin{equation}\label{eq:ft_density}
    f_T(t) \coloneq \frac{1 - L(t)}{1 - \beta}.
\end{equation}
Let $F_T(v) = \int_0^v f_T(x)\, dx$ be the corresponding CDF with quantile function $F_T^{-1}$.
We then define the density
\begin{align}\label{eq:two_param_copula_density}
    c_{\alpha,\beta}(u,v)
    = \frac{1}{(1 - \beta)\, f_T\left(F_T^{-1}(v)\right)}
      \, \1_{\left\{(u, F_T^{-1}(v)) \notin H_{\alpha,\beta}\right\}},
\end{align}
on $[0,1]^2$, and denote by $C_{\alpha,\beta}$ the copula associated with this density.
A visualization of $H_{\alpha,\beta}$ and $c_{\alpha,\beta}$ is provided in Figure~\ref{fig:two_param_copula_family}.

\begin{figure}[ht]
    \centering
    \includegraphics[width=0.49\textwidth]{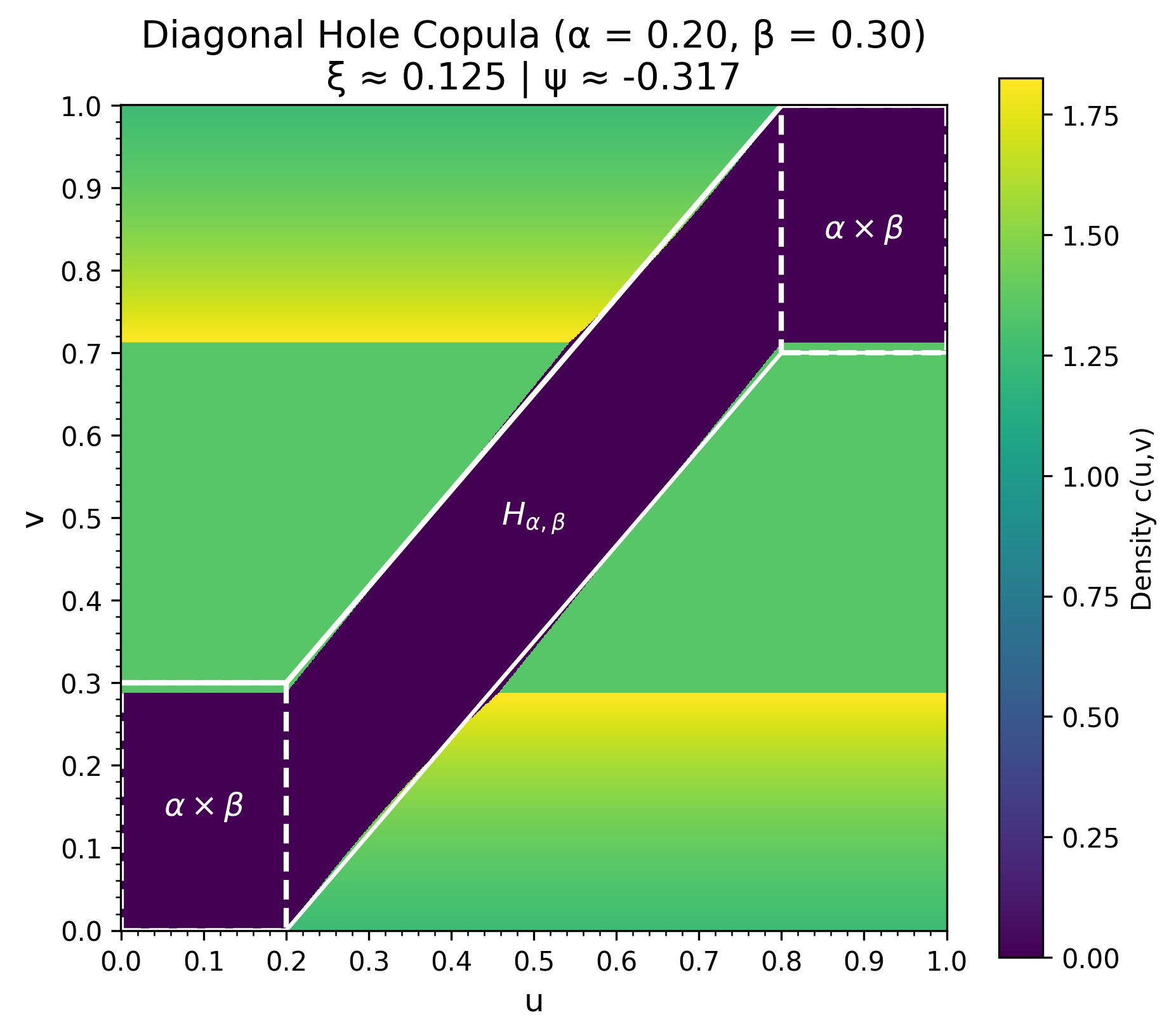}
    \includegraphics[width=0.49\textwidth]{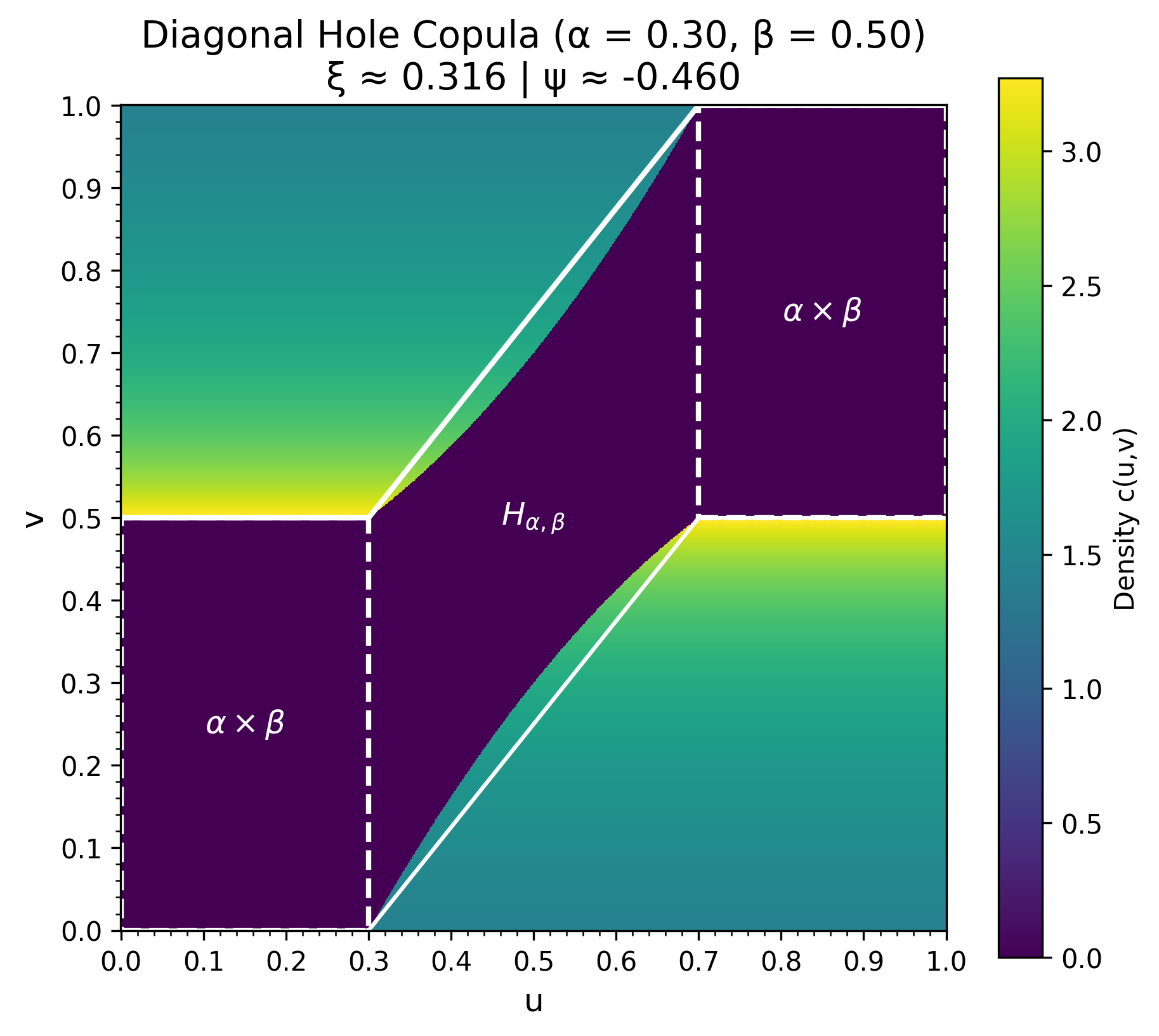}
    \caption{
        A visualization of $C_{\alpha, \beta}$ for $(\alpha, \beta) = (0.2, 0.3)$ (left) and $(\alpha, \beta) = (0.3, 0.5)$ (right).
        The parameters describe the shapes of rectangles in the lower-left and upper-right corners, which together with a connecting band form the shape $H_{\alpha, \beta}$ where the copula should have a density of zero.
        The parameter transform bends this zero-density area to ensure the uniformity of the marginals, hence $H_{\alpha, \beta}$ does not exactly match the zero-density area of $C_{\alpha, \beta}$.
    }
    \label{fig:two_param_copula_family}
\end{figure}

\begin{proposition}\label{prop:two_param_copula_is_copula}
    $C_{\alpha,\beta}$ is indeed a copula for all $\alpha,\beta\in[0,0.5]$.
\end{proposition}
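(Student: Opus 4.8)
The plan is to verify directly that the function $c_{\alpha,\beta}$ from \eqref{eq:two_param_copula_density} is a genuine probability density on $[0,1]^2$ whose two one-dimensional marginals are both uniform; since non-negativity together with uniform marginals already forces total mass one, this is exactly what is needed to conclude that $C_{\alpha,\beta}$ is a copula.

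The first step is to show that $f_T$ in \eqref{eq:ft_density} is a well-defined probability density on $[0,1]$. That $\int_0^1 f_T(t)\de t=1$ follows at once from Tonelli's theorem, since $\int_0^1 L(t)\de t=\lambda^2(H_{\alpha,\beta})=\beta$ and hence $\int_0^1 f_T=\tfrac{1-\beta}{1-\beta}=1$. The delicate point is strict positivity of $f_T$, equivalently $L(t)<1$, which is precisely what makes $F_T$ strictly increasing and the quantile transform $F_T^{-1}$ legitimate, and this is where the constraint $\alpha,\beta<\tfrac12$ is used. To establish it I would compute the horizontal cross-section $L(t)=\int_0^1\1_{\{\psi(s)\le t\le\psi(s)+\beta\}}\de s$ explicitly from the piecewise form of $\psi$: each of the two flat pieces $\psi\equiv 0$ and $\psi\equiv 1-\beta$ contributes $\alpha$ exactly when $t\in[0,\beta]$ respectively $t\in[1-\beta,1]$, while the linear piece of slope $m=\tfrac{1-\beta}{1-2\alpha}$ contributes a band of width at most $\beta/m$. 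Because $\beta<\tfrac12$ the two flat contributions never overlap, so $\max_t L(t)=\alpha+\beta/m=\alpha+\tfrac{\beta(1-2\alpha)}{1-\beta}$, and a one-line clearing of denominators reduces $L(t)<1$ to the elementary inequality $2\beta+\alpha-3\alpha\beta<1$, valid on $[0,\tfrac12)^2$ (at the degenerate corners $\alpha=\tfrac12$ or $\beta=\tfrac12$ the linear piece is empty and one argues directly, the only zero of $f_T$ then being a single point, hence harmless).

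With $f_T>0$ in hand, non-negativity of $c_{\alpha,\beta}$ is immediate, and it remains to check the two marginals. Integrating \eqref{eq:two_param_copula_density} in $u$ with $t:=F_T^{-1}(v)$ held fixed gives $\int_0^1\1_{\{(u,t)\notin H_{\alpha,\beta}\}}\de u=1-L(t)$, so that $\int_0^1 c_{\alpha,\beta}(u,v)\de u=\tfrac{1-L(t)}{(1-\beta)f_T(t)}=1$ by the very definition \eqref{eq:ft_density} of $f_T$. For the second marginal I substitute $t=F_T^{-1}(v)$, $\de v=f_T(t)\de t$, so that the factor $f_T$ cancels and $\int_0^1 c_{\alpha,\beta}(u,v)\de v=\tfrac{1}{1-\beta}\int_0^1\1_{\{(u,t)\notin H_{\alpha,\beta}\}}\de t$; since for each fixed $u$ the vertical slice $\{t:\psi(u)\le t\le\psi(u)+\beta\}$ has length exactly $\beta$ (using $\psi(u)+\beta\le 1$), this equals $\tfrac{1-\beta}{1-\beta}=1$.

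The marginal identities are bookkeeping resting only on Fubini and the change of variables, so I expect the main obstacle to be the first step, namely establishing $L(t)<1$ (equivalently $f_T>0$). This is the only place where the parameter range genuinely matters, since it is what guarantees that $F_T$ is strictly increasing and hence that the quantile transform underlying \eqref{eq:two_param_copula_density} is well-defined. I would therefore present the cross-section computation of $L$ and the inequality $2\beta+\alpha-3\alpha\beta<1$ as the technical heart of the argument and relegate the marginal computations to routine verification.
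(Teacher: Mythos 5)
Your proposal is correct and takes essentially the same route as the paper's proof: non-negativity is immediate from \eqref{eq:two_param_copula_density}, and both uniform marginals follow from the substitution $t=F_T^{-1}(v)$, with one integral reducing to the vertical-slice length $\beta$ of $H_{\alpha,\beta}$ and the other to the definition \eqref{eq:ft_density} of $f_T$. Your extra verification that $L(t)<1$ (hence $f_T>0$ and $F_T^{-1}$ is well defined) fills a point the paper's proof leaves implicit and is welcome; just note that on the edge $\beta=\tfrac12$ the linear piece of $\psi$ is nonempty whenever $\alpha<\tfrac12$, and one has $L(\tfrac12)=1$ for \emph{every} $\alpha\in[0,\tfrac12]$, so the single harmless zero of $f_T$ at $t=\tfrac12$ occurs along this whole edge rather than only at isolated corner parameter values.
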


\begin{proof}
We prove that for given $\alpha,\beta\in[0,1]$ the function $c_{\alpha, \beta}$ is a valid copula density.
Non-negativity is clear from the definition of $c_{\alpha, \beta}$ in \eqref{eq:two_param_copula_density}, so we only need to verify that $c_{\alpha, \beta}$ has uniform marginals.
Regarding the marginals, we use the change of variables $t = F_T^{-1}(v)$, which implies $v = F_T(t)$ and $\de v = f_T(t) \de t$.
For the first marginal distribution, we integrate with respect to $v$ for arbitrary $u\in[0,1]$:
\[
    \int_0^1 c_{\alpha, \beta}(u, v) \de v = \int_0^1 \left( \frac{1}{(1-\beta)f_T(t)} \1_{\{(u, t) \notin H_{\alpha, \beta}\}} \right) f_T(t) \de t 
    = \frac{1}{1-\beta} \int_0^1 \1_{\{(u, t) \notin H_{\alpha, \beta}\}} \de t 
    = 1.
\]
Regarding the second marginal, let $v\in[0,1]$.
Let $t = F_T^{-1}(v)$ be fixed for the integration.
\begin{align*}
    \int_0^1 c_{\alpha, \beta}(u, v) \de u &= \int_0^1 \frac{1}{(1 - \beta)f_T(t)} \1_{\{(u, t) \notin H_{\alpha, \beta}\}} \de u \\
    &= \frac{1}{(1-\beta)f_T(t)} \int_0^1 1 - \1_{\left\{(u, t) \in H_{\alpha, \beta}\right\}} \de u \\
    &= \frac{1}{(1-\beta)f_T(t)} \left( 1 - \int_0^1 \1_{\left\{\psi(u) \le t \le \psi(u) + \beta\right\}} \de u \right) \\
    &= \frac{1 - L(t)}{(1-\beta)f_T(t)} = 1,
\end{align*}
where the last equality is due to \eqref{eq:ft_density}.
Since $c_{\alpha, \beta}(u, v)$ is non-negative and has uniform marginals, it is a valid copula density.
\end{proof}

Whilst the exact relationship between $\alpha(\mu)$ and $\beta(\mu)$ that yields the lowest possible values for $J(\mu):=\mu\psi +\xi$ within this two-parameter copula family remains undetermined, Figure \ref{fig:cvxpy} suggests that for the first regime ($\mu \le 2$), a linear relationship of the form $\alpha(\mu) = \frac35\beta(\mu)$ yields a good choice to get small values for $J(\mu)$.
We hence define a concrete, continuous path through the two-dimensional parameter space $(\alpha, \beta)$ by
\begin{equation}\label{eq:dashed_line_copula_family}
\alpha(\mu)= \frac3{20}\mu \1_{\left\{\mu \le 2\right\}} + \left(\frac12 - \frac{2}{5\mu}\right) \1_{\{\mu > 2\}}, 
\qquad \beta(\mu)= \frac14\min\left\{\mu, 2\right\},
\end{equation}
and define the one-parameter copula family
\(
C_\mu := C_{\alpha(\mu), \beta(\mu)}
\)
for $\mu\ge 0$.
In words, $(\alpha(\mu), \beta(\mu))$ starts at $(0,0)$, follows the line $\alpha = \frac35 \beta$ until it hits the upper boundary $\beta = 0.5$, and then continues along this upper boundary until it reaches the point $(0.5, 0.5)$.
This path roughly follows the shapes of the densities from Figure \ref{fig:cvxpy}.
Numerically evaluating $(\xi(C_\mu),\psi(C_\mu))$ along this path yields the dashed line from Figure \ref{fig:attainable_region}.
This most likely does not yield the true minimizing curve, since alone the parameter choice of $\alpha=\frac{3}{5}\beta$ is only roughly numerically motivated from Figure \ref{fig:cvxpy}.
Nevertheless, it does come reasonably close to the (unattainable) lower bound from Section~\ref{sec:lower_bound_jensen}.
Furthermore, the minimizing $\mu$ for the term $\xi(C_\mu) + \psi(C_\mu)$ lies around $1.26$, which gives a comparably small value of $\xi+\psi$ when comparing it with other classical copula families, see Table \ref{tab:footrule_plus_xi_min}.

\begin{table}[ht!]
  \centering
  \begin{tabular}{lrrrr}
    \toprule
    Family & Parameter & $\xi$ & $\psi$ & $\xi+\psi$ \\
    \midrule
    $(C^{\searrow}_\mu)_{\mu\ge0}$        &  1.646 & 0.251 & $-0.488$ &    $-0.237$ \\
    $(C_\mu)_{\mu\ge0}$        &  1.265 & 0.135 & $-0.328$ &    $-0.193$ \\
    Clayton        &     $-0.384$ &      0.089 &     $-0.188$ &     $-0.099$ \\
    Frank          &     $-2.789$ &      0.110 &     $-0.240$ &     $-0.131$ \\
    Gaussian       &     $-0.404$ &      0.090 &     $-0.211$ &     $-0.121$ \\
    Gumbel-Hougaard&      1 &      0 &      0 &      0 \\
    Joe            &      1 &      0 &      0 &      0 \\
    Lower Fréchet  &  0.25 & $0.0625$ & $-0.125$ &    $-0.0625$ \\
    \bottomrule
  \end{tabular}
  \caption{
  Parameter values that approximately minimize the sum $\xi+\psi$ for common copula families together with the corresponding values of $\xi$, $\psi$, and their sum.
  The entries for the Clayton and Frank copula families are obtained by a dense grid search for the parameter and numerical approximations of $\xi$ and $\psi$.
  The lower Fréchet copula family is here defined as $\lambda \Pi + (1-\lambda) W$ for $\lambda\in[0,1]$, which is the stochastically decreasing rearrangement of the Fréchet upper boundary from Section~\ref{sec:attainable_region}.
  }
  \label{tab:footrule_plus_xi_min}
\end{table}

\appendix
\section{Appendix}

In this appendix, we provide a brief overview of the Banach space optimization framework from \cite[Ch.~3]{bonnans2013perturbation} used in the proofs of Theorem~\ref{thm:exact_region_upper_bound} and Theorem~\ref{thm:lower_boundary_mu_half}, and adapt it to our setting.
The general setting is
\[
  \min_{x\in X} f(x)
  \quad\text{subject to}\quad
  G(x)\in K,
\]
where $f:X\to\R$ and $G:X\to Y$ are twice continuously Fréchet differentiable, $X$ and $Y$ are Banach spaces, and $K\subseteq Y$ is a non-empty closed convex set.
A point $x_0\in X$ is called \emph{feasible} if $G(x_0)\in K$.
The \emph{generalized Lagrangian} is
\[
  \mathcal{L}(x,a,\lambda)\ :=\ a\,f(x)\;+\;\langle \lambda, G(x)\rangle,
  \qquad (x,a,\lambda)\in X\times \R \times Y^*,
\]
where $Y^*$ denotes the dual space of $Y$.
Following the definition in \cite[Eq.~(3.21)]{bonnans2013perturbation}, a pair $(a,\lambda)\in \R\times Y^*$ is called a \emph{generalized Lagrange multiplier} at a feasible $x_0\in X$ if it satisfies the following conditions:
\begin{equation}\label{eq:BS-condition}
  D_x \mathcal{L}(x_0,a,\lambda)=0,\qquad
  \lambda \in N_K(G(x_0)),\qquad
  a\ge 0,\quad (a,\lambda)\neq (0,0).
\end{equation}
Here, the \emph{normal cone} of $K$ at $y\in K$ is
\begin{align}\label{eq:normal_cone}
  N_K(y)\ :=\ \bigl\{\lambda\in Y^*: \langle \lambda, z-y\rangle \le 0\ \ \forall z\in K\bigr\}.
\end{align}
To connect this with the familiar Karush-Kuhn-Tucker conditions, we assume that the constraint set $K$ splits into equality and inequality components.
Concretely, let
\(
  Y \;=\; Y_= \times \Big(\prod_{j=1}^m Y_j\Big),
\)
where $Y_=$ is a Banach space for equality constraints and each $Y_j$ is a Banach space that corresponds to an inequality block.
We then take
\(
  K \;=\; \{0_{Y_=}\}\times \Big(\prod_{j=1}^m K_j\Big),
\)
where each inequality cone is
\(
  K_j \;=\; \{y\in Y_j:\; y \le 0\}.
\)
Thus, feasibility requires $G_=(x_0)=0$ and $G_j(x_0)\le 0$ for each $j=1,\dots,m$.

\begin{lemma}[Equivalence of multipliers and KKT]\label{lem:KKT}
Let $x_0$ be a feasible point.
A pair $(a, \lambda) \in \mathbb{R} \times Y^*$ with $a>0$ is a generalized Lagrange multiplier for $x_0$ if and only if, after normalizing $a$ to $1$, the components of $\lambda = (\gamma, \lambda_1, \dots, \lambda_m)$ satisfy the following four Karush–Kuhn–Tucker (KKT) conditions:
\begin{enumerate}[(i)]
\item \emph{Stationarity:}
\(
D_x \mathcal{L}(x_0, 1, (\gamma, \lambda_1, \dots, \lambda_m)) = 0 \text{ in } X^*.
\)
\item \emph{Primal feasibility:}
\(
 G_=(x_0)=0,\quad G_j(x_0)\in K_j\ \ (j=1,\dots,m).
\)
\item \emph{Dual feasibility:}
\(
 \lambda_j \ge 0.
\)
\item \emph{Complementarity:}
\(
 \langle \lambda_j,\, G_j(x_0)\rangle=0 \qquad (j=1,\dots,m).
\)
\end{enumerate}
\end{lemma}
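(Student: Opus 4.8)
The plan is to unpack the abstract normal-cone condition $\lambda\in N_K(G(x_0))$ from \eqref{eq:BS-condition} into its coordinate blocks and match each block with the corresponding KKT condition. Since $a>0$, I would first normalize: replacing $(a,\lambda)$ by $(1,\lambda/a)$ leaves the stationarity equation $D_x\mathcal{L}(x_0,a,\lambda)=0$ unchanged, because the map $(a,\lambda)\mapsto D_x\mathcal{L}(x_0,a,\lambda)$ is linear and hence positively homogeneous, and it keeps $\lambda/a\in N_K(G(x_0))$ because $N_K(y)$ is a cone (if $\langle\lambda,z-y\rangle\le0$ for all $z\in K$, then so is $\langle t\lambda,z-y\rangle$ for $t>0$). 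The nondegeneracy $(a,\lambda)\neq(0,0)$ is automatic once $a=1$, and the stationarity half of \eqref{eq:BS-condition} becomes literally condition (i).

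Next I would exploit the product structure $K=\{0_{Y_=}\}\times\prod_{j=1}^m K_j$. Since the dual pairing splits blockwise, the defining inequality $\langle\lambda,z-y\rangle\le0$ for all $z\in K$ decouples into one inequality per factor, so the normal cone factorizes as $N_K(y)=N_{\{0_{Y_=}\}}(y_0)\times\prod_{j=1}^m N_{K_j}(y_j)$ for $y=(y_0,y_1,\dots,y_m)\in K$, directly from \eqref{eq:normal_cone}. Writing $\lambda=(\gamma,\lambda_1,\dots,\lambda_m)$ and $G(x_0)=(G_=(x_0),G_1(x_0),\dots,G_m(x_0))$, membership $\lambda\in N_K(G(x_0))$ is then equivalent to $G(x_0)\in K$ together with the blockwise memberships. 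For the equality block I would note $N_{\{0_{Y_=}\}}(0)=Y_=^*$, since the constraint $\langle\gamma,z-0\rangle\le0$ over $z\in\{0\}$ is vacuous; thus $\gamma$ is free, reflecting the sign-unconstrained equality multiplier, while the accompanying feasibility $G_=(x_0)=0$ is part of (ii) and holds because $x_0$ is feasible.

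For each inequality block I would use that $K_j=\{y\in Y_j:y\le0\}$ is a closed convex cone, so the normal cone at $y_j\in K_j$ has the standard description $N_{K_j}(y_j)=\{\lambda_j\in K_j^\circ:\langle\lambda_j,y_j\rangle=0\}$, where $K_j^\circ=\{\lambda_j:\langle\lambda_j,z\rangle\le0\ \forall z\in K_j\}$ is the polar cone. Testing the defining inequality with $z=0$ and $z=2y_j$ (both in $K_j$) forces $\langle\lambda_j,y_j\rangle=0$, and the residual condition $\lambda_j\in K_j^\circ$ is exactly positivity $\lambda_j\ge0$ in the dual order, because the polar of the negative cone is the positive dual cone. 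With $y_j=G_j(x_0)$ these are precisely dual feasibility (iii) and complementarity (iv). Assembling all blocks shows that $(1,(\gamma,\lambda_1,\dots,\lambda_m))$ is a generalized Lagrange multiplier if and only if (i)--(iv) hold, with both implications following from the same blockwise characterization read in either direction.

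The main obstacle will be the clean identification $K_j^\circ=\{\lambda_j\ge0\}$ at full Banach-space generality: one must fix the order structure on each $Y_j$ and on its dual and verify that the polar of the negative cone equals the positive dual cone. In the concrete instances used in this paper, where each $Y_j$ is $\R$ or $L^2$ with its natural (pointwise) order, this identification is routine, so I would either carry it out in those two cases or simply record the general convex-analytic fact with a reference to \cite[Ch.~3]{bonnans2013perturbation}.
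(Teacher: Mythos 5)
Your proposal is correct and follows essentially the same route as the paper's proof: normalize $a$ to $1$, factorize the normal cone over the product $K=\{0_{Y_=}\}\times\prod_j K_j$ (with the equality block contributing the full dual $Y_=^*$), and characterize each inequality block via complementarity (test points $z=0$, $z=2y_j$) plus dual positivity. The only cosmetic difference is that where you invoke the general fact that the polar of the negative cone is the positive dual cone and flag it as needing care, the paper settles it directly by testing with $z=y_j-tw$, $w\ge 0$, $t>0$, which yields $\langle\lambda_j,w\rangle\ge 0$ for all $w\ge 0$ — i.e.\ exactly the dual-order positivity — so no extra order-theoretic machinery is needed.
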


\begin{proof}
\enquote{\emph{$\Rightarrow$}}.
Assume that $(a,\lambda)$ is a generalized Lagrange multiplier with $a>0$ as in \eqref{eq:BS-condition}.
Normalizing by $a$ we may take $a=1$.
Then $D_x \mathcal{L}(x_0,1,\lambda)=0$, and since
\(
\mathcal{L}(x,1,\lambda)=f(x)+\langle \lambda,G(x)\rangle
\),
decomposing $\lambda=(\gamma,\lambda_1,\dots,\lambda_m)$ and $G=(G_=,G_1,\dots,G_m)$ yields the stationarity condition (i).

\smallskip
The second condition in \eqref{eq:BS-condition} is $\lambda \in N_K(G(x_0))$, where the normal cone is defined in \eqref{eq:normal_cone} above as
\[
N_K(y)\ :=\ \{\lambda\in Y^*: \langle \lambda, z-y\rangle \le 0 \ \ \forall z\in K\}.
\]
Since $K=\{0\}\times\prod_{j=1}^m K_j$ is a product, the cone factorizes as
\[
N_K(G(x_0)) = Y_=^* \times \prod_{j=1}^m N_{K_j}(G_j(x_0)).
\]
For each inequality block $K_j=\{y\in Y_j: y\le 0\}$ and $y_j=G_j(x_0)\in K_j$, the definition gives
\[
\lambda_j \in N_{K_j}(y_j)
\;\;\Longleftrightarrow\;\;
\langle \lambda_j, z-y_j\rangle \le 0 \quad \forall z \le 0.
\]
Choosing $z=y_j-tw$ with $w\ge 0$ and $t>0$ shows that $\lambda_j\ge 0$.
Choosing $z=0$ yields $\langle \lambda_j, y_j\rangle \ge 0$, while choosing $z=2y_j$ yields
$\langle \lambda_j, y_j\rangle \le 0$, hence $\langle \lambda_j, y_j\rangle=0$.
Thus,
\[
N_{K_j}(G_j(x_0))=\{\lambda_j\in Y_j^*:\ \lambda_j\ge 0,\;\langle \lambda_j,G_j(x_0)\rangle=0\}.
\]
It follows that $\lambda \in N_K(G(x_0))$ is equivalent to primal feasibility, dual feasibility, and complementarity, i.e., conditions (ii)–(iv).

\smallskip
\enquote{\emph{$\Leftarrow$}}.
If (i)–(iv) hold for some $(\gamma,\lambda_1,\dots,\lambda_m)$, then (i) gives
$D_x \mathcal{L}(x_0,1,\lambda)=0$, and (ii)–(iv) are exactly the condition $\lambda \in N_K(G(x_0))$ established above.
Therefore, $(1,\lambda)$ is a generalized Lagrange multiplier for $x_0$.
\end{proof}

A point $x_0\in X$ is called a \emph{Karush–Kuhn–Tucker (KKT) point} if there exists a \emph{KKT multiplier} $(\gamma,\lambda)$ such that the conditions (i)--(iv) from Lemma~\ref{lem:KKT} hold.
Let \(\Lambda(x_0)\) denote the set of all KKT multipliers for \(x_0\).

\begin{lemma}[Quadratic growth]\label{lem:BS-3.63}
 Suppose that there exists a constant \(\beta>0\) such that
 \begin{align}\label{eq:quadratic_growth}
  \sup_{(\gamma,\lambda)\in\Lambda(x_0)}
   D_{xx}^{2}\mathcal{L}(x_0,1,(\gamma,\lambda))(k,k)
   \;\ge\;
   \beta\|k\|^{2}
   \quad
   \forall k\in X
 \end{align}
 holds at a feasible point \(x_0\).
 Then there exist a constant $C > 0$ and a neighborhood $N_0$ of $x_0$ such that for all feasible $x \in N_0$ it holds that
 \(
 f(x) \ge f(x_0) + C\|x - x_0\|^2
 \).
\end{lemma}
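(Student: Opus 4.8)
The plan is to derive the quadratic growth estimate by sandwiching the objective $f$ between its values along the Lagrangian and then invoking the second-order hypothesis \eqref{eq:quadratic_growth} on $D^2_{xx}\mathcal{L}$. Throughout, I fix an arbitrary KKT multiplier $(\gamma,\lambda)=(\gamma,\lambda_1,\dots,\lambda_m)\in\Lambda(x_0)$.

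First I would show that on the feasible set the objective dominates the Lagrangian. For feasible $x$ one has $G_=(x)=0$ and $G_j(x)\in K_j$, i.e.\ $G_j(x)\le 0$; combined with dual feasibility $\lambda_j\ge 0$ this gives $\langle\lambda_j,G_j(x)\rangle\le 0$, while $\langle\gamma,G_=(x)\rangle=0$. Hence
\[
\mathcal{L}(x,1,(\gamma,\lambda)) = f(x)+\langle\lambda,G(x)\rangle \le f(x).
\]
Evaluating at $x_0$, primal feasibility $G_=(x_0)=0$ together with complementarity $\langle\lambda_j,G_j(x_0)\rangle=0$ yields $\mathcal{L}(x_0,1,(\gamma,\lambda))=f(x_0)$. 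Subtracting, for every feasible $x$ and every $(\gamma,\lambda)\in\Lambda(x_0)$,
\[
f(x)-f(x_0)\;\ge\;\mathcal{L}(x,1,(\gamma,\lambda))-\mathcal{L}(x_0,1,(\gamma,\lambda)).
\]

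Next I would Taylor-expand the right-hand side. Since $\mathcal{L}(\cdot,1,(\gamma,\lambda))$ is twice continuously Fréchet differentiable and stationarity gives $D_x\mathcal{L}(x_0,1,(\gamma,\lambda))=0$, the first-order term vanishes and
\[
\mathcal{L}(x,1,(\gamma,\lambda))-\mathcal{L}(x_0,1,(\gamma,\lambda)) = \tfrac12\,D^2_{xx}\mathcal{L}(x_0,1,(\gamma,\lambda))(x-x_0,x-x_0)+o(\|x-x_0\|^2).
\]
Combining with \eqref{eq:quadratic_growth}, after taking the supremum over $\Lambda(x_0)$ on the Hessian term, one obtains $f(x)-f(x_0)\ge \tfrac{\beta}{2}\|x-x_0\|^2+o(\|x-x_0\|^2)$. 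Shrinking to a neighborhood $N_0$ of $x_0$ on which the remainder is bounded in absolute value by $\tfrac{\beta}{4}\|x-x_0\|^2$ yields the claim with $C=\tfrac{\beta}{4}$.

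The main obstacle is the interchange of the supremum over $\Lambda(x_0)$ with the quadratic remainder: the Taylor remainder $o(\|x-x_0\|^2)$ in principle depends on the multiplier, and the $\lambda$ nearly attaining the supremum of the Hessian may vary with the direction $x-x_0$, so one must control the remainder uniformly over $\Lambda(x_0)$. In the two applications of this lemma (Theorem~\ref{thm:exact_region_upper_bound} and Theorem~\ref{thm:lower_boundary_mu_half}) this difficulty disappears, since there $\Lambda(x_0)$ is a singleton and the Hessian is a fixed positive multiple of $\|\cdot\|^2$ (namely $12\mu^\ast\|\cdot\|_{L^2}^2$, respectively $\mathrm{diag}(2v,2(1-v))$), so a single multiplier can be fixed once and for all and the remainder is a genuine $o(\|x-x_0\|^2)$. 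For the general statement, the uniformity is handled by a standard contradiction argument: assuming growth fails, one extracts a feasible sequence $x_n\to x_0$ with $f(x_n)-f(x_0)=o(\|x_n-x_0\|^2)$, normalizes $k_n:=(x_n-x_0)/\|x_n-x_0\|$, passes to a (weak) limit, and derives a contradiction with \eqref{eq:quadratic_growth}; this is precisely the content of \cite[Thm.~3.63]{bonnans2013perturbation}, to which the adaptation defers.
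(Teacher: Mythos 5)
Your core argument is correct, but it takes a genuinely different route from the paper, whose proof of this lemma is a pure reduction: it notes that \eqref{eq:quadratic_growth} forces $\Lambda(x_0)\neq\emptyset$, uses Lemma~\ref{lem:KKT} to convert a KKT multiplier into a generalized Lagrange multiplier with $a>0$, and then simply cites \cite[Thm.~3.63~(i)]{bonnans2013perturbation} for the conclusion. You instead prove the statement directly: the sandwich $f(x)\ge \mathcal{L}(x,1,(\gamma,\lambda))$ for feasible $x$ (dual feasibility plus $G_j(x)\le 0$ and $G_=(x)=0$), the identity $f(x_0)=\mathcal{L}(x_0,1,(\gamma,\lambda))$ (complementarity), and a second-order Taylor expansion at the stationary point, yielding $f(x)-f(x_0)\ge \tfrac{\beta}{2}\|x-x_0\|^2+o(\|x-x_0\|^2)$ and hence quadratic growth with $C=\beta/4$. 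This argument is complete and rigorous whenever a \emph{single} fixed multiplier witnesses the coercivity in \eqref{eq:quadratic_growth}, which is exactly the situation in both applications (Theorems~\ref{thm:exact_region_upper_bound} and~\ref{thm:lower_boundary_mu_half}); note that for this you do not need $\Lambda(x_0)$ to be a singleton, as you claim, but only the existence of one coercive multiplier, so that claim can be dropped. What your route buys is transparency and self-containedness: it makes visible why Hessian coercivity yields quadratic growth without any constraint qualification, and it shows the lemma is elementary in the form actually used. What it does not cover is the full stated generality: when the supremum in \eqref{eq:quadratic_growth} is only approached by multipliers depending on the direction $k$, the remainder in the expansion of $\langle\lambda,G(\cdot)\rangle$ scales with $\|\lambda\|$, so uniform control fails if $\Lambda(x_0)$ is unbounded. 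You flag this obstacle honestly and defer it to the contradiction argument of \cite[Thm.~3.63]{bonnans2013perturbation} --- the very citation on which the paper's entire proof rests --- so your treatment is at least as rigorous as the paper's and considerably more informative about where the actual mathematical content lies.
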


\begin{proof}
 The quadratic growth condition \eqref{eq:quadratic_growth} requires the set of KKT multipliers $\Lambda(x_0)$ to be non-empty and imposes a strict positivity condition on the Hessian of the Lagrangian.
 By Lemma~\ref{lem:KKT}, this is equivalent to the existence of a generalized Lagrange multiplier $(a,\lambda)$ with $a>0$ satisfying \eqref{eq:BS-condition}.
 The result thus follows from \cite[Thm.~3.63 (i)]{bonnans2013perturbation}.
\end{proof}

Note that condition \eqref{eq:quadratic_growth} is a slightly stronger form of the
second-order sufficient condition in \cite[Thm.~3.63 (i)]{bonnans2013perturbation},
but it is sufficient for our applications.

\bibliographystyle{plainnat}
\bibliography{Literature}
\end{document}